\newtheorem{thm}{Theorem}[section]
\newtheorem{prop}[thm]{Proposition}
\newtheorem{lem}[thm]{Lemma}
\renewcommand{\theclaim}{\kern-3pt}
\theoremstyle{definition}
\newtheorem{definition}[thm]{Definition}
\theoremstyle{remark}
\newtheorem{rem}[thm]{Remark}
\newtheorem{rems}[thm]{Remarks}
\newtheorem{ex}[thm]{Example}
\numberwithin{equation}{section}
\newcommand{\sH}{{\mathcal H}}
\newcommand{\sL}{{\mathcal L}}
\newcommand{\sM}{{\mathcal M}}
\newcommand{\sO}{{\mathcal O}}
\newcommand{\sS}{{\mathcal S}}
\newcommand{\A}{{\mathbb A}}
\newcommand{\C}{{\mathbb C}}
\newcommand{\G}{{\mathbb G}}
\renewcommand{\P}{{\mathbb P}}
\newcommand{\X}{{\mathbb X}}
\newcommand{\Z}{{\mathbb Z}}
\renewcommand{\L}{{\mathbb L}}
\renewcommand{\phi}{\varphi}
\newcommand{\codim}{{\rm codim}}
\newcommand{\Pic}{{\rm Pic}}
\newcommand{\Div}{{\rm div}}
\newcommand{\Hom}{{\rm Hom}}
\newcommand{\Spec}{{\rm Spec \,}}
\newcommand{\sHom}{{\mathcal{H}{om}}}
\newcommand{\id}{{\operatorname{id}}}
\newcommand{\Sch}{{\operatorname{\mathbf{Sch}}}}
\newcommand{\op}{{\text{\rm op}}}
\newcommand{\del}{\partial}
\newcommand{\Spt}{{\mathbf{Spt}}}
\newcommand{\Spc}{{\mathbf{Spc}}}
\newcommand{\Sm}{{\mathbf{Sm}}}
\renewcommand{\lim}{\operatornamewithlimits{\varprojlim}}
\newcommand{\colim}{\operatornamewithlimits{\varinjlim}}
 \newcommand{\Ab}{{\mathbf{Ab}}}
\newcommand{\HZ}{{\operatorname{\sH \Z}}}
\newcommand{\SH}{{\operatorname{\sS\sH}}}
\newcommand{\eff}{{\mathop{eff}}}
\newcommand{\Nis}{{\operatorname{Nis}}}
\newcommand{\ds}{{/\kern-3pt/}}
\newcommand{\Th}{{\mathop{\rm{Th}}}}
\newcommand{\Gr}{{\mathop{\rm{Grass}}}}
\newcommand{\SP}{{\mathbf{SP}}}
\newcommand{\MGL}{{\operatorname{MGL}}}
\newcommand{\BGL}{{\operatorname{BGL}}}
\newcommand{\MU}{{\operatorname{MU}}}
\newcommand{\lci}{l.c.i.\ }
\newcommand{\Cone}{{\operatorname{Cone}}}
\begin{document}

\title{Comparison of cobordism theories}
\author{Marc Levine}
\address{
Department of Mathematics\\
Northeastern University\\
Boston, MA 02115\\
USA}
\email{marc@neu.edu}

\keywords{Algebraic cobordism, Morel-Voevodsky motivic
stable homotopy category,  oriented cohomology}

\subjclass{Primary 14C25, 19E15; Secondary 19E08 14F42, 55P42}
\thanks{The  author gratefully acknowledges the support of the Humboldt Foundation, and the support of the NSF via the grant DMS-0457195}
\renewcommand{\abstractname}{Abstract}
\begin{abstract}  Relying on results of Hopkins-Morel, we show that, for $X$ a quasi-projective variety over a field of characteristic zero, the canonical map $\Omega_n(X)\to \MGL'_{2n,n}(X)$ is an isomorphism.
\end{abstract}
\maketitle
\tableofcontents

\section*{Introduction}  In what follows, $k$ will be a fixed base-field of characteristic zero. We let $\Sm/k$ denote the category of smooth, quasi-projective varieties over $k$, and $\SH(k)$ the Morel-Voevodsky motivic stable homotopy category (see \cite{Morel, Voevodsky}). We denote the classical stable homotopy category by $\SH$.

Voevodsky \cite{Voevodsky} has defined the bigraded cohomology theory $\MGL^{*,*}$ on $\Sm/k$ as the theory represented by the algebraic Thom complex $\MGL\in\SH(k)$. This in turn is the $T$-spectrum constructed as the algebraic analog of the classical Thom complex $\MU\in\SH$, replacing $BU_n$ with $\BGL_n$ and the Thom space of the universal $\C^n$-bundle $E_n\to BU_n$ with the Thom space of the universal rank $n$ vector bundle over $\BGL_n$. 

Besides the definition of complex cobordism via stable homotopy theory, one can also describe $\MU^*(X)$ as cobordism classes of $\C$-oriented proper maps. As an algebraic version of this construction, we have defined with F. Morel the theory $X\mapsto \Omega^*(X)$, also called algebraic cobordism, and show that $\Omega^*$ is the universal oriented cohomology theory on $\Sm/k$, in the sense of \cite[definition 5.1.3]{LevineMorel}. This universal property gives us a natural transformation of functors on $\Sm/k$:
\[
\vartheta^\MGL(X):\Omega^*(X)\to \MGL^{2*,*}(X).
\]
As remarked in the introduction to \cite{LevineMorel}, Hopkins-Morel have constructed a spectral sequence
\[
E^{p,q}_2(n)=H^{p-q,n-q}(F)\otimes\L^{q}\Longrightarrow \MGL^{p+q,n}(F);
\]
using this, it is easy to show that $\vartheta^\MGL(k)$ is surjective. Choosing an embedding $\sigma:k\to\C$, gives the natural transformation
\[
\vartheta^{\MU,\sigma}(X):\Omega^*(X)\to \MU^{2*}(X(\C))
\]
and the commutative diagram
\[
\xymatrixcolsep{50pt}
\xymatrix{
\Omega^*(X)\ar[r]^{\vartheta^\MGL(X)}\ar[dr]_{\vartheta^{\MU,\sigma}}& \MGL^{2*,*}(X)\ar[d]^{\text{Re}^\sigma(X)}\\
&\MU^{2*}(X(\C)).
}
\]
As $\vartheta^{\MU,\sigma}(k)$ is an isomorphism (\cite[theorem 1.2.7]{LevineMorel}), this shows that 
$\vartheta^\MGL(k)$ is an isomorphism as well. The purpose of this paper is to extend this to show that 
$\vartheta^\MGL(X)$ is an isomorphism for all $X\in\Sm/k$.

In fact, we prove more. In \cite{LevineOrient}, we have shown how one can extend $\MGL^{*,*}$ to a {\em bi-graded oriented duality theory} $(\MGL'_{*,*}, \MGL^{*,*})$ on quasi-projective $k$-schemes, $\Sch_k$, and how $\vartheta^\MGL$ extends to a natural transformation
\[
\vartheta_{\MGL'}:\Omega_*\to \MGL'_{2*,*}.
\]
Here $\Omega_*$ is the extension of $X\mapsto \Omega^{\dim X-*}(X)$ to an oriented Borel-Moore homology theory on $\Sch_k$, as described in \cite{LevineMorel}. Our main result in this paper is that 
$\vartheta_{\MGL'}(X)$ is an isomorphism for all $X\in\Sch_k$.

We review the underlying foundations of motivic homotopy theory in \S\ref{sec:MotHom}, recall the construction of $\MGL$-theory and its extension to a Borel-Moore homology theory in \S\ref{sec:BMHomology} and discuss the geometric theory $\Omega_*$ and its relation to $\MGL$-theory in \S\ref{sec:OmegaEtc}, stating our main result in theorem~\ref{thm:main}. The next three sections 
deal with the proof of theorem~\ref{thm:main}. In \S\ref{sec:Fields}, we use the Hopkins-Morel spectral sequence to get generators for $\MGL^{2n,n}(F)$ and $\MGL^{2n-1,n}(F)$, for $F$ a finitely generated field over $k$. As outlined above, this proves the result for $X=\Spec F$. In \S\ref{sec:Boundary}, we look at the boundary map
\[
\del:\MGL_{2n+1,n}'(k(X))\to\colim_{W\subset X}\MGL'_{2n,n}(W)
\]
and give a formula for $\del$ in terms of the divisor classes defined in \cite{LevineMorel}. Combining this information with the right-exact localization sequence for $\Omega_*$ gives a proof of the main theorem by using induction on the dimension of $X$. 

Although the existence of the Hopkins-Morel spectral sequence has been announced some time ago, and its construction has been described in lectures (e.g., lectures of M. Hopkins at Harvard in the Spring of 2006), the details of the construction have not yet been published. The cautious reader may therefore want to consider the results of this paper as conditional, relying on the existence of the Hopkins-Morel spectral sequence.

\section{Motivic homotopy theory}\label{sec:MotHom} We begin by recalling some basic notions in the 
Morel-Voevodsky motivic stable homotopy category $\SH(k)$; we refer the reader to \cite{Morel, Voevodsky} for details. One starts with the category $\Spc(k)$ of {\em spaces over $k$}, this being the category of presheaves of simplicial sets on $\Sm/k$. The pointed version, $\Spc_\bullet(k)$ is the category of presheaves of pointed simplicial sets on $\Sm/k$. 

The standard operations on   simplicial sets, e.g., products $A\times_CB$, co-products $A\cup_CB$, quotients $A/B:=A\cup_B pt$, etc., are all inherited by $\Spc(k)$, by operating on the values of the given presheaves. In particular, one has wedge product and pointed union in $\Spc_\bullet(k)$. In addition, the internal Hom in simplicial sets
\[
\sHom(A,B)(n):= \Hom_\Spc(A\times\Delta^n,B)
\]
gives rise to an internal Hom in $\Spc(k)$; we have pointed versions as well.

Letting $\Spc$ denote the category of simplicial sets (and $\Spc_\bullet$ pointed simplicial sets), taking the constant presheaf gives functors $\Spc\to \Spc(k)$, $\Spc_\bullet\to\Spc_\bullet(k)$. We also have the fully faithful functor $\Sm/k\to \Spc(k)$, taking $X\in\Sm/k$ to the representable presheaf (of sets) $\Hom_{\Sm/k}(-,X)$, where we identify a set $S$ with the constant simplicial set $n\mapsto S$. Similarly, given $X\in\Sm/k$ and a $k$-point $x\in X$, we may consider the pointed scheme $(X,x)$ as an object in $\Spc_\bullet(k)$.

\begin{ex} The {\em simplicial suspension operator} $\Sigma_s:\Spc_\bullet(k)\to \Spc_\bullet(k)$ is $\Sigma_sW:=S^1\wedge W$. Similarly, define $\Sigma_{gm}W:=\G_m\wedge W$, where $\G_m:=\A^1\setminus\{0\}$, pointed by $1$.

For $a\ge b$, define the {\em weighted sphere} $S^{a,b}\in\Spc_\bullet(k)$ by
\[
S^{a,b}:=\Sigma_s^{a-b}\Sigma_{gm}^b S^0.
\]

We have as well the $T$-suspension operator
\[
\Sigma_TW:=(\A^1/(\A^1\setminus\{0\}))\wedge W
\]
\end{ex}

The {\em unstable motivic homotopy category} $\sH(k)$ is formed from $\Spc(k)$ by a two-step localization process. First, one introduces the Nisnevich topology. Given $A\in \Spc(k)$, and 
 $x\in X\in \Sm/k$, one has the stalk of $A$ at $x$, 
 \[
 A_x:=\colim_{x\in U\to X}A(U)
 \]
 as $x\in U\to X$ runs over all Nisnevich neighborhoods of $x$ in $X$, i.e., over all \'etale maps $f:U\to X$ together with a lifting of the inclusion $x\to X$ to $x\to U$. Declare a map $f:A\to B$ in $\Spc(k)$ to a {\em Nisnevich local weak equivalence} if for each $x\in X\in \Sm/k$, the map of simplicial sets 
 $f_x:A_x\to B_x$ induces an isomorphism on $\pi_0$ and on all homotopy groups $\pi_n(A_x,q)\to \pi_n(B_x,f_x(q))$, $n\ge1$. Inverting all Nisnevich local weak equivalences forms the category $\sH_\Nis(k)$. 
 
 Next, one defines an object $Z$ of $\sH_\Nis(k)$ to be {\em $\A^1$-local} if, for all $X\in\Sm/k$, and all $n\ge0$,  the map
 \[
 p^*:\Hom_{\sH_\Nis(k)}(\Sigma^n_sX_+,Z)\to \sHom(\Sigma^n_sX\times\A^1_+,Z)
 \]
 is an isomorphism in $\sH_\Nis(k)$.   A $f:A\to B$ in $\sH_\Nis(k)$ is an {\em $\A^1$-weak equivalence} if 
 \[
 p^*:\Hom_{\sH_\Nis(k)}(\Sigma^n_sB_+,Z)\to :\Hom_{\sH_\Nis(k)}(\Sigma^n_sA_+,Z)
 \]
 is an isomorphism for all $\A^1$-local $Z$ and all $n\ge0$. Inverting all the $\A^1$-weak equivalences in $\sH_\Nis(k)$ gives us the category $\sH(k)$. The same construction in the pointed setting gives us the pointed version $\sH_\bullet(k)$.
 
\begin{rem}  In $\sH_\bullet(k)$, we have isomorphisms
\[
(\P^1,\infty)\cong \A^1/(\A^1\setminus\{0\})\cong S^1\wedge\G_m=S^{2,1}.
\]
Thus, the $T$-suspension operator $\Sigma_T$ is isomorphic (in $\sH(k)$) to $\P^1\wedge-$.
\end{rem}

For each object $A$ in $\sH_\bullet(k)$ we have the {\em bi-graded $\A^1$ homotopy sheaves} $\pi^{\A^1}_{a,b}(A)$, defined (for $a\ge b\ge0$) as the Nisnevich sheafification of the presheaf
\[
U\mapsto \Hom_{\sH_\bullet(k)}(U_+\wedge S^{a,b}, A)
\]

We now pass to the stable theory. The {\em motivic stable homotopy category over $k$}, $\SH(k)$, is defined as a localization of the category $\Spt(k)$ of {\em $T$-spectra}: objects in $\Spt(k)$ are sequences
 \[
 E:=(E_0,E_1,\ldots)
 \]
 $E_n\in\Spc_\bullet(k)$,  together with bonding maps
 \[
 \epsilon_n:E_n\wedge T\to E_{n+1}.
 \]
 A morphism $f:E\to E'$ is a sequence of maps $f_n:E_n\to E_n'$ in $\Spc(k)$ that commute with the respective bonding maps. 
 
 The construction of $\Spt(k)$ and $\SH(k)$ models that of the category of spectra, $\Spt$ and the classical stable homotopy category $\SH$:\\
\\ 
1. The category of spectra, $\Spt$, is the category with objects sequences of pointed simplicial sets, $(E_0, E_1,\ldots)$, together with bonding maps $E_n\wedge S^1\to E_{n+1}$, where a morphism $E\to E'$ is given by a sequence of maps $E_n\to E_n'$ commuting with the bonding maps.  
 
 Sending a pointed simplicial set $S\in\Spc_\bullet$ to 
 \[
 \Sigma^\infty S:=(S, S\wedge S^1, S\wedge S^1\wedge S^1,\ldots)
 \]
 with identity bonding maps defines a functor $\Sigma^\infty:\Spc_\bullet\to\Spt$. Defining the suspension operator $\Sigma$ on $\Spt$ by
 \[
 \Sigma(E_0,E_1,\ldots):=(\Sigma E_0, \Sigma E_1,\ldots),
 \]
 the functor $\Sigma^\infty$ commutes with the respective suspension operators.\\
 \\
 2. Given a morphism $f:E\to E'$ in $\Spt$, one has the {\em cofiber sequence}
 \[
 E\xrightarrow{f}E'\xrightarrow{i}C(f)\xrightarrow{p} \Sigma E
 \]
 where $C(f)$ is the sequence of mapping cones 
 \[
 C(f_n):=([0,1]\times E_n/0\times E_n\cup [0,1]\times *)\cup_{1\times E_n}E'_n.
 \]
 The map $i$ is the sequence of inclusions $E'_n\to C(f_n)$ and $p$ is the sequence of quotient maps $C(f_n)\to S^1\wedge E_n$ (collapsing $i(E'_n)$).  \\
\\
3. For a spectrum $E=(E_0,E_1,\ldots)$, the {\em stable homotopy group} $\pi_a^s(E)$ is defined by
\[
\pi_a^s(E):=\colim_n\pi_{n+a}E_n
\]
where the colimit is taken using the maps sending $f:S^{n+a}\to E_n$ to
\[
S^{n+a+1}= S^{n+a}\wedge S^1\xrightarrow{f\wedge \id} E_n\wedge S^1\to E_{n+1}.
\]
Note that $\pi^s_a(E)$ is defined for all $a\in\Z$.
A morphism $E\to E'$ is a {\em stable weak equivalence} if the induced map $\pi^s_a(E)\to \pi^s_a(E')$ is an isomorphism for all $a$.\\
\\
4. The stable homotopy category $\SH$ is formed from $\Spt$ by inverting the stable weak equivalences; the suspension operator on $\Spt$ descends to one on $\SH$. The functor
\[
\Sigma^\infty:\Spc_\bullet\to\Spt
\]
descends to 
\[
\Sigma^\infty:\sH\to \SH,
\]
commuting with the respective suspension operators, and the suspension operator $\Sigma$ on $\SH$ is an equivalence. $\SH$ is a triangulated category with translation $\Sigma$ and distinguished triangles given by the images of cofiber sequences (up to isomorphism).\\
\\ 
The construction of the stable homotopy category $\SH$, suitably modified, gives us the motivic stable homotopy category $\SH(k)$. We give here a quick sketch of the construction.

 Given $E=(E_0, E_1,\ldots)$ in $\Spt(k)$, the bonding maps give rise to the inductive system
 \[
\ldots\to \pi^{\A^1}_{2n+a,n+b}(E_n)\to \pi^{\A^1}_{2n+2+a,n+1+b}(E_{n+1})\to\ldots
 \]
 defined by sending $f:U_+\wedge S^{2n+a,n+b}\to E_n$ to the composition
 \begin{multline*}
U_+\wedge S^{2n+2+a,n+1+b}  =U_+\wedge S^{2n+a,n+b}\wedge S^{2,1}
\\\xrightarrow{f\wedge \id}
E_n\wedge S^{2,1}
\cong E_n\wedge T\xrightarrow{\epsilon_n}E_{n+1}.
\end{multline*}
Define the {\em motivic stable homotopy sheaf} $\pi^{\A^1}_{a,b}(E)$ by
\[
\pi^{\A^1}_{a,b}(E):=\colim_n\pi^{\A^1}_{2n+a,n+b}(E_n).
\]
Note that $\pi^{\A^1}_{a,b}(E)$ is defined for all $a,b\in\Z$.

A morphism $f:E\to E'$ in $\Spt(k)$ is a {\em  stable $\A^1$ weak equivalence} if $f$ induces an isomorphism
\[
f_*:\pi^{\A^1}_{a,b}(E)\to \pi^{\A^1}_{a,b}(E')
\]
for all $a,b$. $\SH(k)$ is formed from $\Spt(k)$ by inverting all $\A^1$ stable weak equivalences.

\begin{rems}  1. We have the {\em infinite $T$-suspension functor}
\[
\Sigma^\infty_T:\Spc_\bullet(k)\to \Spt(k)
\]
sending $A$ to the sequence $(A, A\wedge T, A\wedge T\wedge T,\ldots)$. We have as well the operations $\Sigma_T$, $\Sigma_s$, $\Sigma_{gm}$ on $\Spt(k)$, defined by
\[
\Sigma_?(E_0,E_1,\ldots):=(\Sigma_?E_0, \Sigma_?E_1,\ldots),
\]
and commuting with $\Sigma^\infty_T$.\\
\\
2. $\Sigma_T^\infty$ descends to a functor
\[
\Sigma^\infty_T:\sH_\bullet(k)\to \SH(k)
\]
and $\Sigma_T$, $\Sigma_s$ and $\Sigma_{gm}$ descend to operators  on $\SH(k)$ and $\sH_\bullet(k)$, with
\[
\Sigma_T\cong\Sigma_s\circ\Sigma_{gm}\cong \Sigma_{gm}\circ\Sigma_s.
\]
\ \\
3. $\Sigma_T$ is invertible on $\SH(k)$, the inverse given by
\[
(E_0, E_1,\ldots)\mapsto(pt, E_0, E_1,\ldots)
\]
Thus $\Sigma_s$ and $\Sigma_{gm}$ are also invertible on $\SH(k)$. For $a,b\in\Z$, define the operator $\Sigma^{a,b}$ on $\SH(k)$ by
\[
\Sigma^{a,b}:=\Sigma_s^{a-b}\circ\Sigma_{gm}^b.
\]
\ \\
4. Given a morphism $f:E\to F$ in $\Spc_\bullet(k)$, we have the cone
\[
\Cone(f):=([0,1]\times E/0\times E\cup [0,1]\times *)\cup_{1\times E}F,
\]
and the  cofiber sequence
\[
E\xrightarrow{f} F\to \Cone(f)\to\Sigma_sE
\]
defined just as for spaces. Given a morphism $f:E\to F$ in $\Spt(k)$, we have the cone
\[
\Cone(f)_n=\Cone(f_n:E_n\to F_n)
 \]
 and the cofiber sequence
\[
E\xrightarrow{f} F\to \Cone(f)\to\Sigma_sE
\]
which is just the sequence of cofiber sequences
\[
E_n\xrightarrow{f_n} F\to \Cone(f_n)\to\Sigma_sE_n.
\]
$\SH(k)$ is a triangulated category, with translation functor $\Sigma_s$. The distinguished triangles are those isomorphic to the image of a cofiber sequence in $\Spt(k)$.
\end{rems}

\begin{definition} Let $E$ and $F$ be in $\SH(k)$. The {\em $E$-cohomology groups of $F$}, $E^{a,b}(F)$, are
\[
E^{a,b}(F):=\Hom_{\SH(k)}(F, \Sigma^{a,b}E).
\]
For $X\in\Sm/k$, define
\[
E^{a,b}(X):E^{a,b}(\Sigma^\infty_TX_+)=\Hom_{\SH(k)}(\Sigma^\infty_TX_+, \Sigma^{a,b}E).
\]
\end{definition}

\section{$\MGL$ cohomology and Borel-Moore homology} \label{sec:BMHomology}

Let $p:U\to B$ be a vector bundle over a scheme $B$ with 0 section $0_B$. The {\em Thom space} of $U$ is the space over $k$
\[
\Th(U):=U/U\setminus0_B\in \Spc_\bullet(k).
\]
There is a canonical isomorphism
\[
\Th(U\oplus\sO_B)\cong \Th(U)\wedge T.
\]

 We recall the Morel-Voevodsky theory of algebraic cobordism $X\mapsto \MGL^{**}(X)$, represented in the Morel-Voevodsky motivic stable homotopy category $\SH(k)$ by the algebraic version, $\MGL$, of the classical Thom spectrum $\MU$. As in classical topology, we have
\[
\MGL=(\MGL_0,\MGL_1,\ldots,\MGL_n,\ldots)
\]
where $\MGL_n$ is the Thom space of the universal rank $n$ quotient bundle $U_n$ over $\BGL_n$:
\[
\MGL_n:=\Th(U_n\to \BGL_n)
\]
$\BGL_n$ in turn is just the limit of Grassmann varieties
\[
\BGL_n:=\Gr(\infty,n):=\colim_{N\to\infty}\Gr(N,n)
\]
where $\Gr(N,n)$ is the Grassmannian of rank $n$ quotients of $\sO^N$, and $U_n\to \BGL_n$ is the representing rank $n$ bundle with universal quotient $\pi_{N,n}:\sO^N\to U_n$. The inductive system $\ldots\to \Gr(N,n)\to\Gr(N+1,n)\to\ldots$ is defined via the projections
\[
\sO^{N+1}\to\sO^N
\]
on the first $N$ factors.

We have the closed immersion $i_n:\BGL_n\to \BGL_{n+1}$ representing the surjection 
\[
\sO^N\oplus\sO\xrightarrow{\pi_n\oplus\id}U_n\oplus \sO
\]
on $\BGL_n$; the gluing maps $\epsilon_n:\MGL_n\wedge T\to\MGL_{n+1}$ are just the maps
\[
\MGL_n\wedge T=\Th(U_n)\wedge T=\Th(U_n\oplus\sO)\xrightarrow{\Th(i_n)}\Th(U_{n+1})=\MGL_{n+1}
\]

The resulting bi-graded cohomology theory $X\mapsto\MGL$ is an {\em oriented} theory. There are many equivalent definitions of this notion; for details we refer the reader to \cite{Panin,PaninPimenovRoendigs}. For our purposes, we can take an oriented theory to be one with a good theory of first Chern classes for line bundles. In the case of $\MGL$, $c_1(L)$ is defined as follows. Take a line bundle $L\to X$, with $X$ smooth and quasi-projective over $k$.  By Jouanoulou's trick, and the homotopy invariance of $\MGL^{*,*}$, we can assume that $X$ is affine, and hence $L$ is generated by global sections. Thus, $L$ is the pull-back of $\sO_{\P^N}(1)$ for some morphism $f:X\to \P^N$. Modulo checking independence of various choices, this reduces us to defining $c_1(\sO(1))\in \MGL^{2,1}(\P^\infty)$, where $\MGL^{2,1}(\P^\infty)$ is short-hand for $\lim_N \MGL^{2,1}(\P^N)$, the limit begin defined by a fixed sequence of linear embeddings
\[
\P^1\to \P^2\to\ldots\to \P^N\to\ldots\ .
\]

For this, note that $\BGL_1=\P^\infty$, $U_1\to\BGL_1$ is $\sO(1)$, and hence $\MGL_1=Th(\sO_{\P^\infty}(1))$. The bonding maps in $\MGL$ give us the sequence of maps
\[
\MGL_1\wedge T^{\wedge n}\to \MGL_{n+1}.
\]
This defines the map  in $\Spt(k)$
\[
\iota:\Sigma_T^\infty Th(\sO_{\P^\infty}(1))\to S^{2,1}\wedge \MGL,
\]
giving us the class $[\iota]\in\MGL^{2,1}(Th(\sO_{\P^\infty}(1)))$. Composing the 0-section $s:\P^\infty\to \sO_{\P^\infty}(1)$ with the canonical quotient map $\sO_{\P^\infty}(1)\to Th(\sO_{\P^\infty}(1))$ defines $\pi:\P^\infty\to Th(\sO_{\P^\infty}(1))$; we set
\[
c_1(\sO(1)):=\pi^*([\iota])\in \MGL^{2,1}(\P^\infty).
\]

An orientation on a bi-graded cohomology gives rise to a good theory of push-forward maps for projective morphisms (see \cite[theorem 4.1.4]{Panin} for a detailed statement). For $\MGL$, this says we have functorial push-forward maps
\[
f_*:\MGL^{a,b}(Y)\to \MGL^{a+2d,b+d}(X)
\]
for each projective morphism $f:Y\to X$ in $\Sm/k$, where $d=\codim f:=\dim_kX-\dim_kY$. The connection with the first Chern map is that, for $L\to X$ a line bundle on $X\in\Sm/k$ with zero-section $s:X\to L$, one has
\[
c_1(L)=s^*(s_*(1_X))
\]
where $1_X\in\MGL^{0,0}(X)$ is the unit.

In fact, our extension \cite[theorem 1.10]{LevineOrient} of Panin's theorem gives good projective push-forward maps for $\MGL$-cohomology with supports. We describe the general situation.

 Let $\SP$ denote the category of {\em smooth pairs}, this being the category with objects $(M,X)$, $M\in \Sm/k$, $X\subset M$ a closed subset. A morphism $f:(M,X)\to (N,Y)$ is a morphism $f:M\to N$ in $\Sm/k$ such that $f^{-1}(Y)\subset X$. We have as well the category $\SP'$, with the same objects as $\SP$, but where a morphism $f:(N,Y)\to (M,X)$ is a projective morphism $f:N\to M$ in $\Sm/k$ with $f(Y)\subset X$. 
 
 For any $T$-spectrum $E$, sending $(M,X)\in\SP$ to the {\em $E$-cohomology with supports}
\[
E^{a,b}_X(M):=\Hom_{\SH(k)}(\Sigma^\infty_TM/(M\setminus X),\Sigma^{a,b}E)
\]
defines a functor $E^{*,*}$ from $\SP^\op$ to bi-graded abelian groups. In case $E$ is an oriented ring $T$-spectrum, $(M,X)\mapsto E^{*,*}_X(M)$ defines a bi-graded  {\em oriented ring cohomology theory on $\SP$}, in the sense of \cite[definition 1.3]{LevineOrient}. In particular, for $E=\MGL$, we have the bi-graded oriented ring cohomology theory $(M,X)\mapsto \MGL^{*,*}_X(M)$ on $\SP$.

Let $E$ be a a bi-graded  oriented ring cohomology theory on $\SP$. By \cite[theorem 1.10]{LevineOrient}, there are push-forward maps 
\[
f_*:E^{a,b}_Y(N)\to E^{a+2d,b+d}_X(M)
\]
for each map $f:(N,Y)\to (M,X)$ in $\SP'$, where $d=\codim f$, such that the maps $f_*$ define an 
{\em integration with supports} on $E^{*,*}$, in the sense of \cite[definition 1.6]{LevineOrient}. Without listing all of this definition here, this means that $f\mapsto f_*$ is functorial on $\SP'$, satisfies a projection formula with respect to cup products, and is compatible with pull-back in transverse cartesian squares. In addition, the maps $f_*$ are compatible with the boundary maps in the long exact sequence of triples.

Next, we extend the integration on the oriented theory $E^{*,*}$ to a bi-graded {\em oriented duality theory} $(H,E)$. Let $\Sch_k'$ be the category of reduced quasi-projective schemes over $k$, with morphisms the projective morphisms. For $X\in\Sch_k'$, choose a closed immersion $X\to M$ with $M\in \Sm/k$, and define
\[
H_{a,b}(X):=E^{2d_M-a,d_M-b}_X(M)
\]
where $d_M:=\dim_kM$ (we assume that $M$ is equi-dimensional over $k$).  In \cite[theorem 3.4]{LevineOrient}, we show that $X\mapsto H_{*,*}(X)$ extends to a functor $H_{a,b}:\Sch_k'\to \Ab$, and that the pair $(H_{*,*}, E^{*,*})$ defines a bi-graded oriented duality theory on $\Sch_k$. We refer the reader to \cite[defintion 3.1]{LevineOrient} for the precise definition of this notion, noting that this includes comparison isomorphisms
\[
\alpha_{M,X}:H_{*,*}(X)\to E^{2d_M-*,d_M-*}_X(M)
\]
for each closed immersion $X\to M$, $M\in\Sm/k$, such that, if we are given a projective morphism $f:Y\to X$, closed immersions $Y\to N$, $X\to M$, $N,M\in\Sm/k$, and an extension of $f$ to a projective morphism $F:N\to M$, then the diagram
\[
\xymatrix{
H(Y)\ar[r]^{\alpha_{N,Y}}\ar[d]_{f_*}&A_Y(N)\ar[d]^{F_*}\\
H(X)\ar[r]_{\alpha_{M,X}}&A_X(M)
}
\]
commutes. In addition, $H_{*,*}$ has pull-back maps for open immersions, a boundary map
\[
\delta_{X,Y}:H_{a,b}(X\setminus Y)\to H_{a-1,b}(Y)
\]
for each closed subset $Y\subset X$,  external products
\[
H_{*,*}(X)\otimes H_{*,*}(Y)\to H_{*,*}(X\times Y)
\]
and cap products
\[
f^*(-)\cap : A^{a,b}_X(M)\otimes H_{p,q}(Y)\to H_{p-a,q-b}(f^{-1}(X))
\]
for each map $f:Y\to M$ and smooth pair $(M,X)\in\SP$, such that these operations are compatible with the corresponding ones on $E$-cohomology with supports via the comparison isomorphisms $\alpha$.  

Let $L\to Y$ be a line bundle on some $Y\in\Sch_k$. Using the fact there is a line bundle $\sL\to M$ for some $M\in \Sm/k$, and a morphism $f:Y\to M$ with $L\cong f^*\sL$, the  cap product with  $c_1(\sL)$ gives a well-defined first Chern class operator
\[
\tilde{c}_1(L):H_{p,q}(Y)\to H_{p+2,q+1}(Y),
\]
independent of the choice of smooth envelope $Y\to M$ and extension of $L$ to $\sL$.  

\begin{rem} One can think of a bi-graded oriented duality theory $(H,E)$ as a generalization of a Bloch-Ogus twisted duality theory, the difference being that one does not require that $c_1(L\otimes M)=c_1(L)+c_1(M)$.   Replacing this is the {\em formal group law} $F_E(u,v)\in E^{2*,*}(k)[[u,v]]$, $E^{2*,*}(k):=\oplus_nE^{2n,n}(k)$. This is the power series characterized by the identity
\[
F_E(c_1(L),c_1(M))=c_1(L\otimes M)
\]
for each pair of line bundles $L,M$ on a fixed $X\in\Sm/k$. See \cite[\S3.9]{Panin} for further details.
\end{rem}

We denote the extension of $\MGL^{*,*}$ to a bi-graded oriented duality theory by $(\MGL'_{*,*}, \MGL^{*,*})$.

\section{$\Omega_*$ and $\MGL'_{2*,*}$}\label{sec:OmegaEtc} Together with F. Morel \cite{LevineMorel}, we have defined the ``geometric" theory of algebraic cobordism $\Omega_*$, on $\Sch_k$. By \cite[theorem 7.1.3]{LevineMorel}, the theory $\Omega_*$ is the universal oriented Borel-Moore homology theory on $\Sch_k$, in the sense of \cite[definition 5.1.3]{LevineMorel}. In particular, for each $n$, $\Omega_n$ is a functor 
\[
\Omega_n:\Sch_k'\to \Ab,
\]
for each \lci morphism $g:X'\to X$ of relative dimension $d$, there is a pull-back map
\[
g^*:\Omega_n(X)\to\Omega_{n+d}(X'),
\]
 functorial in $g$,  
there is an associative and commutative external product
\[
\times:\Omega_n(X)\otimes\Omega_{n'}(X)\to \Omega_{n+n'}(X\times X'),
\]
and a unit element $1\in\Omega_0(k)$.

The external product makes $\Omega_*(k)$ a commutative, graded ring, and $\Omega_*(X)$ a graded $\Omega_*(k)$-module for each $X$. In fact, there is a canonical isomorphism $\L_*\cong \Omega_*(k)$, where $\L_*$ is the Lazard ring, giving us the formal group law $F_\Omega(u,v)\in\Omega_*(k)[[u,v]]$ (see \cite[theorem 4.3.7]{LevineMorel}).

Define the first Chern class operator of a line bundle $L\to X$ with zero-section $s:X\to L$ as
\[
\tilde{c}_1(L)(\alpha):=s^*(s_*(\alpha)).
\]
Then the locally nilpotent operators $\tilde{c}_1(L):\Omega_*(X)\to \Omega_{*-1}(X)$ commute with one another (for fixed $X$) and  satisfy the formal group law
\[
F_\Omega(\tilde{c}_1(L), \tilde{c}_1(M))=\tilde{c}_1(L\otimes M).
\]

For $X\in\Sch_k$ and $n\ge0$ an integer, let $\sM_n(X)$ be the free abelian group on the set of isomorphism classes of projective maps $f:Y\to X$, with $Y\in\Sm/k$, and $Y$ irreducible of dimension $n$ over $k$. Sending $X$ to $\sM_n(X)$ becomes a functor
\[
\sM_n:\Sch'_k\to \Ab
\]
where, for $g:X\to X'$ a projective morphism and $f:Y\to X$ in $\sM_n(X)$, we define $g_*(f:Y\to X):=g\circ f:Y\to X'$. Additionally, for $g:X'\to X$ a smooth, quasi-projective morphism of relative dimension $d$, we define
\[
g^*:\sM_n(X)\to \sM_{n+d}(X')
\]
by $g^*(f:Y\to X):=p_2:Y\times_XX'\to X'$. Finally, we have an external product
\[
\times:\sM_n(X)\otimes\sM_{n'}(X')\to \sM_{n+n'}(X\times X')
\]
by sending $(f:Y\to X)\otimes(f':Y'\to X')$ to $f\times f':Y\times Y'\to X\times X'$ (strictly speaking, we take the sum of the restrictions of $f\times f'$ to the irreducible components of $Y\times Y'$). 

The construction of $\Omega_*$ gives a natural surjection
\[
\rho_X:\sM_n(X)\to \Omega_n(X)
\]
compatible with push-forward $g_*$ for projective $g$, pull-back $g^*$ for smooth, quasi-projective $g$ and the external product $\times$. 

Now let $(H,E)$ be an oriented duality theory. For each $Y\in\Sm/k$, we have the unit $1_Y\in E^{0,0}(Y)$. If $Y$ has dimension $n$ over $k$,  the comparison isomorphism
\[
\alpha_{Y,Y}:H_{2n,n}(Y)\to E^{0,0}(Y)
\]
gives us  the {\em fundamental class} $[Y]_H\in H_{2n,n}(Y)$
\[[Y]_H:=\alpha_{Y,Y}^{-1}(1_Y).
\]
For $X\in\Sch_k$, we may map $\sM_n(X)$ to $H_{2n,n}(X)$ by sending $f:Y\to X$ to $f_*([Y]_H)$; by \cite[proposition 4.2]{LevineOrient}, this descends to a homomorphism
\[
\vartheta_H(X):\Omega_*(X)\to H_{2*,*}(X),
\]
natural with respect to projective push-forward, pull-back by open immersions and compatible with external products and first Chern class operators. By \cite[lemma 4.3]{LevineOrient}, the natural transformation $\vartheta_H$, restricted to $\Sm/k$, defines a natural transformation of oriented cohomology theories
(in the sense of \cite[definition 1.1.2]{LevineMorel})
\[
\vartheta^E:\Omega^*\to E^{2*,*};\quad \vartheta^E(X):=\alpha_{X,X}\circ\vartheta_H(X).
\]
Here $\Omega^*(Y):=\Omega_{n-*}(Y)$ where $n=\dim_kY$.

We can now state our main result.

\begin{thm}\label{thm:main} Let $k$ be a field of characteristic zero. Then
\[
\vartheta_{\MGL'}(X):\Omega_*(X)\to \MGL'_{2*,*}(X)
\]
is an isomorphism for all $X\in\Sch_k$.
\end{thm}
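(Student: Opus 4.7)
Following the outline in the introduction, the plan is to combine a generic-point case (via Hopkins-Morel) with an inductive argument that uses a divisor-class description of the boundary in $\MGL'$-theory.

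\textbf{Step 1 (Function fields).} I would first prove the theorem for $X=\Spec F$ with $F$ a finitely generated extension of $k$, interpreting both sides as filtered colimits over nonempty opens of a smooth model. Feed the Hopkins-Morel spectral sequence
\[
E_2^{p,q}(n)=H^{p-q,n-q}(F)\otimes\L^q \Longrightarrow \MGL^{p+q,n}(F)
\]
in the total degrees $(2n,n)$ and $(2n-1,n)$; the standard vanishing $H^{a,b}(F)=0$ for $a>b$ restricts the nonzero $E_2$-terms so narrowly that one can read off explicit generators. For $\MGL^{2n,n}(F)$ the only surviving $E_2$-term is $H^{0,0}(F)\otimes\L^n=\L^n$, and for $\MGL^{2n-1,n}(F)$ one expects contributions of the form $f\cdot\lambda$, $f\in F^*$, $\lambda\in\L^{n-1}$, coming from $H^{1,1}(F)\otimes\L^{n-1}=F^*\otimes\L^{n-1}$. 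Combined with the already-noted isomorphism $\vartheta^{\MGL}(k)$ (derived from the complex realization $\mathrm{Re}^\sigma$ and $\vartheta^{\MU,\sigma}(k)$) and naturality of $\vartheta_{\MGL'}$ under pullback along smooth morphisms, this yields that $\vartheta_{\MGL'}(\Spec F)$ is an isomorphism.

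\textbf{Step 2 (Boundary formula).} Next I would analyze the boundary map
\[
\partial : \MGL'_{2n+1,n}(\Spec k(X)) \to \colim_{W \subsetneq X} \MGL'_{2n,n}(W)
\]
from the long exact localization sequence of the duality theory $(\MGL'_{*,*},\MGL^{*,*})$. By Step 1, classes on the left are generated by symbols $f\cdot\lambda$ with $f\in k(X)^*$ and $\lambda\in\L^{n-1}$. The goal is a formula $\partial(f\cdot\lambda) = \sum_i n_i\,[W_i]\cdot\lambda$ (up to sign) when $\Div(f)=\sum_i n_i W_i$, read through the comparison $\vartheta_{\MGL'}$. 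This reduces, via the projective pushforward of \cite[theorem 1.10]{LevineOrient}, the projection formula for $(\MGL'_{*,*},\MGL^{*,*})$, and the construction $c_1(L)=s^*s_*(1_X)$ from the zero section, to a universal calculation on a line bundle and its vanishing locus.

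\textbf{Step 3 (Induction on dimension).} With Steps 1--2 in hand, I would run Noetherian induction on $\dim X$, reducing immediately to $X$ irreducible. Passing to the colimit over dense opens $U\subset X$, the right-exact localization sequence of $\Omega_*$
\[
\colim_{Z\subsetneq X}\Omega_n(Z)\to\Omega_n(X)\to\Omega_n(\Spec k(X))\to 0
\]
maps via $\vartheta_{\MGL'}$ to the corresponding piece of the $\MGL'_{2*,*}$ long exact sequence. The induction hypothesis gives the isomorphism on the left colimit (each $Z$ has $\dim Z<\dim X$), Step 1 gives it at the generic point, and Step 2 identifies the image of the $\MGL'$-boundary inside $\colim_Z\MGL'_{2n,n}(Z)$, under the induction identification, with divisor classes of the form $\sum n_i[W_i]$; these lie in the kernel of pushforward to $\Omega_n(X)$ by the formal group law axiom. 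A standard four-lemma-type diagram chase then yields both injectivity and surjectivity of $\vartheta_{\MGL'}(X)$.

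\textbf{Main obstacle.} The technical heart is Step 2: matching the abstractly defined $\SH(k)$-theoretic boundary with a concrete divisor-class formula. The boundary is produced by the cofiber of $\Sigma^\infty_T X_+ \to \Sigma^\infty_T U_+$ composed with the comparison isomorphisms $\alpha_{M,X}$, whereas the target is the geometric $\sum n_i[W_i]$ in $\Omega$; bridging them requires threading a $K_1$-type symbol $f\cdot\lambda$ through the localization triangle and using compatibility of $c_1$ with the triangulated boundary. Once that identification is established, Step 3 is formal.
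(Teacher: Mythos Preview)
Your overall architecture matches the paper's: induction on dimension, isomorphism at generic points via Hopkins--Morel, and a boundary computation to close the diagram chase. The gap is in Step~2, and it is not merely technical.

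The formula $\partial(f\cdot\lambda)=\sum_i n_i[W_i]\cdot\lambda$ is the boundary one expects for an \emph{additive} oriented theory (e.g.\ Chow groups), but $\MGL$ carries the universal formal group law, so this is wrong as stated. Already when $\Div(f)=nD$ with $D$ smooth, the class that must appear is governed by the $n$-series $[n]_{F_\Omega}$ rather than by multiplication by $n$; when the support of $\Div(f)$ is not smooth there are further correction terms from the intersections of components (and the $W_i$ need not even be smooth, so $[W_i]\in\Omega_*$ is not a priori defined). The paper handles this by first blowing up---using resolution of singularities, which is where the characteristic-zero hypothesis actually enters---so that $f$ extends to a morphism $\tilde X_i\to\P^1$ with $D_0=f^{-1}(0)$ and $D_\infty=f^{-1}(\infty)$ strict normal crossing divisors, and then proving (proposition~\ref{prop:Boundary}) that $\partial(\psi(f))$ agrees, under $\vartheta_{\MGL'}$, with the \emph{cobordism divisor class} $\Div^\Omega(f_0)-\Div^\Omega(f_\infty)$ of \cite[definition~3.1.5]{LevineMorel}. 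That class is a sum over all strata $D_I$ of terms $\iota_{I*}\bigl(G_I(\xi_1,\ldots,\xi_r)([D_I])\bigr)$ with $G_I$ universal power series determined by the formal group law; the naive divisor is only the leading part. The route to this identification goes through an auxiliary ``Chern class with supports'' $c_1^D(L,s)\in\MGL^{2,1}_D(X)$, not simply through $c_1(L)=s^*s_*(1_X)$.

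This also affects Step~3. The reason $\Div^\Omega(f_0)-\Div^\Omega(f_\infty)$ lies in $\ker\bigl(i_*:\Omega^{(1)}_*(X)\to\Omega_*(X)\bigr)$ is not a direct appeal to ``the formal group law axiom'' on $\sum n_i[W_i]$, but the identity $\tilde\iota_*\Div^\Omega(f_0)=c_1(f^*\sO(1))=\tilde\iota_*\Div^\Omega(f_\infty)$ on the blowup (\cite[proposition~3.1.9]{LevineMorel}), pushed down along $\pi$. Without the correct divisor class, the SNC reduction via resolution, and this cancellation argument, neither the boundary identification nor the diagram chase in Step~3 goes through.
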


The proof with occupy the next three sections.
\section{$\MGL^{*,*}$ for fields} \label{sec:Fields}

Let $X$ be a smooth irreducible scheme over $k$, $F=k(X)$. By \cite[theorem 4.3.7]{LevineMorel}, $\Omega^*(F)=\L^{*}:=\L_{-*}$. Thus, the natural transformation $\vartheta_\MGL:\Omega^*\to \MGL^{2*,*}$ gives a ring homomorphism
\[
\phi:\L^*\to \MGL^{2*,*}(F).
\]

Next, we want to define a group homomorphism
\[
\psi_X:\Gamma(X,\sO_X^*)\to \MGL^{1,1}(X).
\]
For this,  we note that, in the  homotopy category $\sH(k)$, $B\G_m$ represents the Picard functor on $\Sm/k$, and similarly, in the pointed homotopy category $\sH_\bullet(k)$, $B\G_m$ represents the relative Picard functor on pairs of smooth varieties (this follows directly from \cite[proposition 4.3.8]{MorelVoevodsky}). Since
\[
\Gamma(X,\sO_X^*)\cong \Pic(X\times\A^1,X\times\{0,1\}),
\]
it suffices to construct a map
\[
\psi:B\G_m\to \MGL_1.
\]
But $B\G_m\cong\P^\infty$ in $\sH_\bullet(k)$, and $\MGL_1=Th(\sO_{\P^\infty}(1))$, so the composition
\[
\P^\infty\xrightarrow{\text{zero-section}}\sO_{\P^\infty}(1)\xrightarrow{\pi}Th(\sO_{\P^\infty}(1))
\]
does the trick.

\begin{rem} The careful reader with note that the map $\psi$ does not send the base-point of $B\G_m=\P^\infty$ to the base-point of $\MGL_1$. We correct this by extending $\psi$ to $\tilde{\psi}:\P^\infty\cup_{(1:0:\ldots)}\A^1\to \MGL_1$, by identifying $\A^1$ with the fiber of $\sO(1)$ over $(1:0:\ldots)$. Using the base-point $1\in\A^1$ for 
$\P^\infty\cup_{(1:0:\ldots)}\A^1$ makes $\tilde{\psi}$ a pointed map, and the collapse map
\[
(\P^\infty\cup_{(1:0:\ldots)}\A^1, 1)\to(\P^\infty,(1:0:\ldots)) 
\]
is an isomorphism in $\sH_\bullet(k)$.
\end{rem}

Let $\HZ\in\SH(k)$ be the $T$-spectrum classifying motivic cohomology (see e.g. 
 \cite{OstRond, PaninPimenovRoendigs, Voevodsky}). We have the canonical map
\[
\rho_\HZ:\MGL\to \HZ
\]
classifying $\HZ$ as an oriented cohomology theory (\cite[theorem 1.0.1]{PaninPimenovRoendigs}).

\begin{lem}\label{lem:units} For $X\in\Sm/k$, the composition
\[
\rho_\HZ(X)\circ\psi_X:\Gamma(X,\sO_X^*)\to H^{1,1}(X)
\]
is an isomorphism of abelian groups.
\end{lem}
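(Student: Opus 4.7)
The plan is to identify the composition $\rho_\HZ \circ \psi$ with the canonical first Chern class map $\P^\infty \to \HZ_1$ in motivic cohomology, and then deduce the claim from the well-known identification $H^{1,1}(X) \cong \Gamma(X, \sO_X^*)$ in Voevodsky's motivic theory.

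First, I would unwind the definition of $\psi$: by construction $\psi: B\G_m \simeq \P^\infty \to \MGL_1 = \Th(\sO_{\P^\infty}(1))$ is the zero-section composed with the Thom collapse, i.e., exactly the map $\pi$ used in defining the universal class $[\iota]$. Thus $\psi$ represents the universal first Chern class for $\MGL$ in $\sH_\bullet(k)$. Since $\rho_\HZ: \MGL \to \HZ$ is the canonical map of oriented ring $T$-spectra, it satisfies $\rho_\HZ(c_1^\MGL(L)) = c_1^\HZ(L)$ for every line bundle $L$; hence the composition $\rho_\HZ \circ \psi: \P^\infty \to \HZ_1$ classifies $c_1^\HZ(\sO(1)) \in H^{2,1}(\P^\infty, \Z)$.

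Second, by the foundational computation of motivic cohomology, this classifying map is precisely the $\A^1$-equivalence $\P^\infty \simeq \HZ_1 \simeq K(\Z(1), 2)$. Applying $[(X \times \A^1)/(X \times \{0, 1\}), -]_{\sH_\bullet(k)}$ therefore yields an isomorphism on Hom sets. Combined with the identifications used in defining $\psi_X$,
\[
\Gamma(X, \sO_X^*) \cong \Pic(X \times \A^1, X \times \{0, 1\}) \cong [(X \times \A^1)/(X \times \{0, 1\}), B\G_m]_{\sH_\bullet(k)},
\]
and with the standard description $[(X \times \A^1)/(X \times \{0, 1\}), \HZ_1]_{\sH_\bullet(k)} \cong H^{1,1}(X)$, this produces the desired isomorphism $\Gamma(X, \sO_X^*) \xrightarrow{\sim} H^{1,1}(X)$.

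The main obstacle will be the first step: cleanly verifying that $\rho_\HZ \circ \psi$ is the universal Chern class map. This requires some bookkeeping for the basepoint (handled by the correction $\tilde\psi$ in the paper's remark, which is an isomorphism in $\sH_\bullet(k)$) and invoking the characterization of $\rho_\HZ$ as the unique orientation-preserving map of ring $T$-spectra to the Eilenberg-MacLane spectrum. Once these identifications are in place, the remainder of the argument is a direct consequence of the well-known equivalence $\P^\infty \simeq K(\Z(1), 2)$ in $\sH_\bullet(k)$.
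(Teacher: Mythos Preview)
Your proposal is correct and follows essentially the same route as the paper: identify $\psi$ with the universal $\MGL$ first Chern class, push it through the orientation-preserving map $\rho_\HZ$ to obtain the motivic first Chern class, and then invoke the known computation of $H^{1,1}$. The only cosmetic difference is that you phrase the last step as the $\A^1$-equivalence $\P^\infty\simeq K(\Z(1),2)$, whereas the paper states it directly as the fact that $c_1^{\HZ}:\Pic(X\times\A^1,X\times\{0,1\})\to H^{2,1}(X\times\A^1/X\times\{0,1\})=H^{1,1}(X)$ is an isomorphism; these are equivalent formulations of the same computation.
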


\begin{proof} By \cite[theorem 3.2.4(1)]{Panin}, the map $\psi:B\G_m\to \MGL_1$ composed with the canonical map
\[
\iota_1:\Sigma^\infty_T\MGL_1\to S^{2,1}\wedge \MGL
\]
induces the first Chern class map for the oriented theory $\MGL$ via the natural transformation
\begin{multline*}
\Pic(X,A)\cong \Hom_{\sH_\bullet(k)}(X/A,B\G_m)\xrightarrow{\psi_*}\Hom_{\sH(k)}(X/A,\MGL_1)\\
\xrightarrow{\iota_*}\Hom_{\SH(k)}(\Sigma_T^\infty(X/A),S^{2,1}\wedge \MGL).
\end{multline*}
As $\rho_\HZ$ induces a map of oriented theories, $\rho_\HZ$ is compatible with the respective Chern classes. Thus, we see that the composition
\begin{multline*}
\Pic(X,A)\cong \Hom_{\sH_\bullet(k)}(X/A,B\G_m)\xrightarrow{\psi_*}\Hom_{\sH_\bullet(k)}(X/A,\MGL_1)\\
\xrightarrow{\iota_*}\Hom_{\SH(k)}(\Sigma_T^\infty(X/A),S^{2,1}\wedge \MGL)\xrightarrow{\rho_\HZ} \Hom_{\SH(k)}(\Sigma_T^\infty(X/A),S^{2,1}\wedge \HZ)
\end{multline*}
induces the first Chern class map for motivic cohomology. Since
\[
c_1^\HZ:\Gamma(X,\sO_X^*)=\Pic(X\times\A^1,X\times\{0,1\})\to
H^{2,1}(X\times\A^1/X\times\{0,1\})=H^{1,1}(X)
\]
is an isomorphism, the result follows.
\end{proof}

Using $\phi$ and the $\MGL^{*,*}(k)$-module structure on $\MGL^{*,*}(X)$, $\psi_X$ extends to
\[
\psi_X:\Gamma(X,\sO_X^*)\otimes\L^*\to \MGL^{2*+1,*+1}(X).
\]

\begin{lem}\label{lem:surj} Let $X\in\Sm/k$ be irreducible. For $F=k(X)$, the map
\[
\phi_F:\L^*\to \MGL^{2*,*}(F)
\]
is an isomorphism, and the map
\[
\psi_F:F^\times\otimes \L^*\to \MGL^{2*+1,*+1}(F)
\]
is surjective.
\end{lem}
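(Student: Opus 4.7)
The plan is to apply the Hopkins--Morel spectral sequence
\[
E^{p,q}_2(n) = H^{p-q, n-q}(F) \otimes \L^q \Longrightarrow \MGL^{p+q, n}(F)
\]
and exploit the restricted shape of motivic cohomology of a field. Specifically, for any field $F$ one has $H^{a, b}(F) = 0$ whenever $a > b$ (immediate from Bloch's cycle complex formula $H^{a,b}(F) = \CH^b(F, 2b-a)$, since the cycle group $z^b(F, i)$ vanishes for $i < b$), whenever $b < 0$, and $H^{p, 0}(F) = 0$ for $p \ne 0$; moreover $H^{0, 0}(F) = \Z$ and $H^{1, 1}(F) \cong F^\times$ by Nesterenko--Suslin--Totaro.

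First I would compute the $E_2$-contributions to $\MGL^{2n, n}(F)$. With $p + q = 2n$, the term $E_2^{p, q}(n) = H^{2n-2q, n-q}(F) \otimes \L^q$ is nonzero only when $2n - 2q \le n - q$ and $n - q \ge 0$, forcing $q = n$; only $H^{0, 0}(F) \otimes \L^n = \L^n$ survives. A parallel computation at $\MGL^{2n+1, n+1}(F)$ leaves $q \in \{n, n+1\}$; the $q = n+1$ summand contains $H^{-1, 0}(F) = 0$, so only $H^{1, 1}(F) \otimes \L^n = F^\times \otimes \L^n$ survives. Next I would verify that all higher-page differentials in or out of these two isolated positions vanish, since each source and target $E_2$-term again lies in the vanishing range for $H^{*, *}(F)$ (for instance, a $d_r$ out of $(n, n)$ at weight $n$ targets a bidegree with motivic factor $H^{2r-1, r-1}(F) = 0$ for $r \ge 1$). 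The spectral sequence then collapses in both bidegrees, giving canonical isomorphisms $\MGL^{2n, n}(F) \cong \L^n$ and $\MGL^{2n+1, n+1}(F) \cong F^\times \otimes \L^n$.

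It remains to identify these abstract isomorphisms with $\phi_F$ and $\psi_F$. The map $\phi_F$ factors as $\L^* = \Omega^*(k) \xrightarrow{\vartheta^\MGL(k)} \MGL^{2*, *}(k) \to \MGL^{2*, *}(F)$; the first arrow is already an iso (as recalled in the introduction using the spectral sequence at $k$), and naturality of the Hopkins--Morel spectral sequence along $\Spec F \to \Spec k$ identifies the restriction map as the identity on $\L^*$. For $\psi_F$, lemma \ref{lem:units} says that $\rho_\HZ \circ \psi_F$ is the canonical iso $F^\times \cong H^{1,1}(F)$, which is precisely the edge map realizing $\MGL^{1, 1}(F) \cong F^\times$; hence $\psi_F: F^\times \to \MGL^{1, 1}(F)$ is itself an iso. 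By multiplicativity of the Hopkins--Morel spectral sequence, the extension $\psi_F(u \otimes \omega) = \psi_F(u) \cdot \phi(\omega)$ then corresponds on $E_\infty$ to the tautological product $(F^\times \otimes \L^0) \otimes (\Z \otimes \L^n) \to F^\times \otimes \L^n$, which is an isomorphism and in particular surjective.

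The main obstacle I anticipate is the systematic verification that higher differentials vanish on every page at the relevant bidegrees: although each individual check is a short consequence of the vanishing range for $H^{*, *}(F)$ together with the grading $\L^q = 0$ for $q > 0$, one must be careful for small $r$ where the $\L^q$-factor alone does not force triviality. A secondary subtlety is making sure the multiplicative pairing on the $E_\infty$-page agrees with cup product in $\MGL^{*, *}$ in the required bidegrees, which should be part of the standard construction of the Hopkins--Morel spectral sequence.
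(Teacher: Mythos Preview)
Your approach via the Hopkins--Morel spectral sequence is the same as the paper's, and your computation of the relevant $E_2$-terms and their identification with $\phi_F$ and $\psi_F$ matches the paper's argument essentially verbatim. The one genuine difference is how you prove \emph{injectivity} of $\phi_F$. The paper only checks that $E_2^{n,n}(n)$ consists of permanent cycles (giving surjectivity) and then establishes injectivity by a separate argument: choosing an embedding $\sigma:k\hookrightarrow\C$, using the realization functor to $\MU^*$, and invoking that $\L^*\to\MU^{2*}(pt)$ is an isomorphism together with the split injectivity of $\MU^*(pt)\to\MU^*(U(\C))$. You instead observe that the incoming differentials $d_r$ into $E_r^{n,n}(n)$ also vanish (their sources have motivic factor $H^{1-2r,1-r}(F)=0$ since $1-r<0$ for $r\ge2$), so $E_2^{n,n}(n)=E_\infty^{n,n}(n)$ on the nose. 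This is correct and arguably cleaner: it stays entirely within the spectral sequence and avoids the transcendental input. The same observation shows, as you note, that $\psi_F$ is in fact an isomorphism, which is stronger than what the lemma asserts and stronger than what the paper proves.

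The ``obstacle'' you anticipate about higher differentials is not really one: at each relevant position the source or target has either a negative-weight motivic factor or a positive-degree Lazard factor, and the checks are as short as the one you wrote out. Your identification of the edge maps with $\phi_F,\psi_F$ via lemma~\ref{lem:units} and multiplicativity is a reasonable substitute for the paper's appeal to ``the construction of the spectral sequence''; both arguments are at the same level of rigor given that the Hopkins--Morel construction is being taken as a black box.
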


\begin{proof} We  use the Hopkins-Morel spectral sequence for $\Spec F$
\begin{equation}\label{eqn:HMSS}
E^{p,q}_2(n)=\oplus_nH^{p-q,n-q}(F)\otimes\L^{q}\Longrightarrow \MGL^{p+q,n}(F).
\end{equation}
Since $H^{a,b}(F)=0$  if $a>b$ or $b<0$,  $H^{0,0}(F)=\Z$, $H^{1,1}(F)=F^\times$, and $\L^q=0$ if $q>0$, this gives us
\[
E^{p,q}_2(n)= \begin{cases} 
0&\text{ if }p>n\\
\L^{n}&\text{ if }p=q=n\\
F^\times\otimes L^{n-1}&\text{ if }p=n, q=n-1.
\end{cases}
\]
Thus, elements of $E^{n,n}_2(n)$ and $E^{n,n-1}(n)$ are permanent cycles, giving us surjections
\[
E^{n,n}_2(n)\to E^{n,n}_\infty(n);\quad E^{n,n-1}_2(n)\to E^{n,n-1}_\infty(n).
\]
In addition, if $p+q=2n$ or $p+q=2n-1$, the only non-zero $E_2^{p,q}$ terms are $E^{n,n}_2$ or $E^{n,n-1}_2$, so we have
\[
\MGL^{2n,n}(F)=E^{n,n}_\infty(n);\ \MGL^{2n-1,n}(F)=E^{n,n-1}_\infty.
\]
It follows directly from the construction of the spectral sequence that the composition
\[
\L^n=E^{n,n}_2(n)\to E^{n,n}_\infty(n)\to\MGL^{2n,n}(F)
\]
is $\phi_F$. Similarly, the composition
\[
F^\times\otimes \L^{n-1}=E^{n,n-1}_2(n)\to E^{n,n-1}_\infty(n)\to \MGL^{2n-1,n}(F)
\]
is $\psi_F$. This proves that $\phi_F$ and $\psi_F$ are surjective.

To see that $\phi_F$ is injective, we may assume that $k$ admits an embedding $\sigma:k\to \C$. Via this embedding, we have the oriented cohomology theory $\MU^{*,*}_\sigma$ on $\Sm/k$ 
\[
\MU^{a,b}_\sigma(Y):= \MU^a(Y(\C)).
\]
By \cite{PaninPimenovRoendigs}, this induces a natural transformation
\[
\MGL^{a,b}(Y)\to \MU^a(Y(\C))
\]
of oriented cohomology theories. Note that
\[
\MU^{a,b}_\sigma(F)=\colim_U\MU^a(U(\C))
\]
as $U$ runs over non-empty Zariski open subsets of $X$.

The composition
\[
\L^*\to \MGL^{2*,*}(k)\to\MU^{2*,*}_\sigma(k)= \MU^{2*}(pt)
\]
is the ring homomorphism classifying the formal group law of $\MU^*$. As this is the universal formal group law, this composition is an isomorphism. Now let $U\subset X$ be a non-empty open subset. Since $U$ has a $\C$-point, the pull-back map
\[
\MU^*(pt)\to \MU^*(U(\C))
\]
is injective, and thus
\[
\MU_\sigma^{*,*}(pt)\to \MU^{*,*}_\sigma(F)
\]
is injective as well. This shows that  $\phi_F$ is injective, completing the proof.
\end{proof}

\section{Chern classes, divisors and the boundary map}\label{sec:Boundary}

Let $U$ be an open subscheme of some $X\in\Sm/k$ with closed complement $i:D\to X$, and inclusion $j:U\to X$. Given $u\in \Gamma(U,\sO^*)$, we have the element
\[
\psi(u)\in\MGL^{1,1}(U).
\]
Taking the boundary in the long exact localization sequence
\[
\ldots\to \MGL^{1,1}(X)\xrightarrow{j^*}\MGL^{1,1}(U)\xrightarrow{\partial_{X,D}}\MGL^{2,1}_D(X)\xrightarrow{i_*}\MGL^{2,1}(X)\to\ldots
\]
gives us the element
\[
\partial_{X,D}(\psi(u))\in \MGL^{2,1}_D(X).
\]
Our goal in this section is to give a formula for $\partial(\psi(u))$. 

To help with our computation, we introduce the {\em effective   Picard monoid with supports}, $\Pic^\eff_D(X)$, and the relative first Chern class
\[
c_1^D(L)\in \MGL^{2,1}_D(X).
\]
The set $\Pic^\eff_D(X)$ is the set of isomorphism classes of pairs $(L,s)$, with $L\to X$ a line bundle, and $s:X\to L$ a section that is nowhere vanishing on $\X\setminus D$; an isomorphism of such pairs $\phi:(L,s)\to (L',s')$ is an isomorphism of line bundles $\phi:L\to L'$ with $s'=\phi\circ s$. We make  $\Pic^\eff_D(X)$ a monoid using tensor product:
\[
(L,s)\cdot(L',s'):=(L\otimes L',s\otimes s');
\]
the unit is $(\sO_X,1)$. In fact, the same formula defines a commutative product
\[
\cdot:\Pic^\eff_{D_1}(X,D_1)\times \Pic^\eff_{D_2}(X)\to \Pic^\eff_{D_1\cup D_2}(X).
\]
\begin{rem} This notion is taken from Fulton \cite{Fulton}, who calls a pair $(L,s)$ a {\em pseudo-divisor}.
\end{rem}

Let $\Pic^\eff(X)\subset \Pic(X)$ be the monoid of isomorphism classes of line bundles $L$ on $X$ that admit a non-zero section. We have the evident forgetful maps
\[
\Pic^\eff_D(X)\to \Pic^\eff(X)\to \Pic(X).
\]
We now describe how to lift the Chern class map
\[
c_1:\Pic(X)\to \MGL^{2,1}(X)
\]
to a Chern class map with supports
\[
c_1^D:\Pic^\eff_D(X)\to \MGL^{2,1}_D(X).
\]

Given $(L,s)\in\Pic^\eff_D(X)$, we first suppose that $L$ is generated by global sections on $X$. Extend $s$ to a finite set of generating sections
\[
s=s_0,s_1,\ldots, s_N,
\]
giving us the map
\[
f:X\to \P^N.
\]
Define
\[
h:U\times\A^1\to \P^N
\]
by 
\[
h(u,t):=(s_0(u):ts_1(u):\ldots:ts_N(u));
\]
since $s_0$ is nowhere zero on $U$, $h$ is well-defined. Also
\[
h(u,1)=f(u);\quad h(u,0)=(1:0:\ldots:0),
\]
and thus $f\cup h$ gives a well-defined map of pointed spaces over $k$:
\[
f\cup h:X\cup_{U\times 1}U\times\A^1/U\times 0\to (\P^N,(1:0:\ldots:0)).
\]
Composing $f\cup h$ with
\[
\P^N\to \P^\infty=\BGL_1\xrightarrow{\psi}Th(\sO(1))=\MGL_1
\]
and noting that the collapse map
\[
X\cup_{U\times 1}U\times\A^1/U\times 0\to X/U
\]
is an isomorphism in $\sH_\bullet(k)$ gives us
\[
c_1^D(L)\in\MGL^{2,1}(X/U)=\MGL^{2,1}_D(X).
\]
It is easy to check that the map $f\cup h$ is independent (as a map in $\sH_\bullet(k)$) of the choices made. Thus, we have a well-defined element $c_1^D(L)\in\MGL^{2,1}_D(X)$, assuming that $L$ is generated by global sections.

In general, we use Jouanoulou's trick. Take an affine space bundle $p:Y\to X$ with $Y$ affine, and replace $(X,D,L)$ with $(Y,p^{-1}(D),p^*(L))$. As $Y$ is affine, $p^*(L)$ is generated by global sections, so we can apply the construction of the preceding paragraph, noting that
\[
p^*:\MGL^{*,*}_D(X)\to\MGL^{*,*}_{p^{-1}(D)}(Y)
\]
is an isomorphism. Since the collection of such affine space bundles forms a directed system, it is easy to check that the resulting class 
\[
c_1^D(L):=(p^*)^{-1}(c_1^{p^{-1}(D)}(p^*(L)))
\]
is independent of the choice of $p:Y\to X$, completing the construction.

Let $\SP^1$ be the full subcategory of the category $\SP$ of smooth pairs consisting of $(X,D)$ with $D\subset X$ a pure codimension one closed subset. Clearly, $(X,D)\mapsto \Pic^\eff_D(X)$ defines a functor from $\SP^1$ to the category of monoids, and 
\[
c_1^D:\Pic^\eff_D(X)\to\MGL^{2,1}_D(X)
\]
 is a natural transformation.  Similarly, it is easy to see that the diagram
\begin{equation}\label{eqn:ChernSupp}
\xymatrix{
\Pic^\eff_D(X)\ar[r]\ar[d]_{c_1^D}&\Pic(X)\ar[d]^{c_1}\\
\MGL^{2,1}_D(X)\ar[r]&\MGL^{2,1}(X)
}
\end{equation}
commutes.

\begin{ex}\label{ex:main} Take $X=\A^1=\Spec k[x]$, $D=0$, $(L,s)=(\sO_{\A^1},x)$. Note that $\A^1/\A^1\setminus0\cong(\P^1,(1:0))$ in $\sH_\bullet(k)$, so
\[
\MGL^{2,1}_0(\A^1)\cong\MGL^{2,1}(\P^1,(1:0))\cong\MGL^{0,0}(k)=\Z.
\]
Then $c_1^0(\sO_{\A^1},x)=\pm1$. One can verify this by a direct computation. Alternatively, we can use the naturality of $c_1^0$, replacing $(\A^1, 0,(\sO_{\A^1},x))$ with $(\P^1,(1:0),(\sO(1),X_1))$; by excision, the restriction map
\[
\MGL^{2,1}_{(1:0)}(\P^1)\to \MGL^{2,1}_0(\A^1)
\]
is an isomorphism. Using the commutativity of the diagram \eqref{eqn:ChernSupp}, we see that $c_1^{(1:0)}(\sO(1),X_1)$ maps to $c_1(\sO(1))\in \MGL^{2,1}(\P^1)$. By the projective bundle formula, 
\[
\MGL^{2,1}(\P^1)=c_1(\sO(1))\cdot \MGL^{0,0}(k)\oplus\MGL^{2,1}(k)=c_1(\sO(1))\cdot \MGL^{0,0}(k),
\]
so $c_1(\sO(1))$ is a generator of $\MGL^{2,1}(\P^1)=\Z$, as desired. If we normalize the isomorphism 
\[
\MGL^{2,1}_0(\A^1)\cong \MGL^{2,1}(\P^1)\cong \Z
\]
by using $c_1(\sO(1))$ as the distinguished generator, then  $c_1^0(\sO_{\A^1},x)=+1$.
\end{ex}

We now show that the Chern class with supports computes the boundary map in our localization sequence.

\begin{lem}\label{lem:BoundaryComp} Let $f:X\to\A^1$ be a dominant morphism, with $X\in\Sm/k$ irreducible, let $D=f^{-1}(0)$ and $U=X\setminus D$. Let $u\in\Gamma(U,\sO^*)$ be the restriction of $f$. Then
\[
\partial_{X,D}(\psi(u))=c_1^D(\sO_X,f)\in\MGL^{2,1}_D(X).
\]
\end{lem}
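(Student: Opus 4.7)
The plan is to reduce to a universal case by naturality and then finish by comparing with motivic cohomology.

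First, I would observe that the map $f \colon X \to \A^1$ is a morphism of smooth pairs $(X,D) \to (\A^1, 0)$ in $\SP^1$, that $(\sO_X, f) = f^*(\sO_{\A^1}, x)$ in $\Pic^\eff_D(X)$, and that $u = f^*(x|_{\G_m})$ in $\Gamma(U, \sO^*)$. By naturality of the Chern class with supports $c_1^D$ (which is stated in the text as a natural transformation $\Pic^\eff_{(-)} \to \MGL^{2,1}_{(-)}$ on $\SP^1$), naturality of $\psi$ for morphisms in $\SP$, and the compatibility of the localization boundary with pullback along morphisms of pairs, both $\partial_{X,D}(\psi(u))$ and $c_1^D(\sO_X, f)$ are $f^*$-images of the corresponding classes for the universal triple $(\A^1, 0, x)$. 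It therefore suffices to prove
\[
\partial_{\A^1, 0}\bigl(\psi(x|_{\G_m})\bigr) = c_1^0(\sO_{\A^1}, x) \quad \text{in } \MGL^{2,1}_0(\A^1).
\]

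Next, by Example~\ref{ex:main}, excision and the projective bundle formula identify $\MGL^{2,1}_0(\A^1) \cong \MGL^{2,1}(\P^1,(1{:}0)) \cong \MGL^{0,0}(k) = \L^0 = \Z$, with $c_1^0(\sO_{\A^1}, x)$ equal to $+1$ under this normalization (where the distinguished generator comes from $c_1(\sO(1))$). So the task reduces to showing $\partial_{\A^1,0}(\psi(x|_{\G_m})) = +1$. To compute this, I would apply the orientation-preserving map $\rho_\HZ \colon \MGL \to \HZ$. The argument in the proof of Lemma~\ref{lem:units} shows precisely that $\rho_\HZ \circ \psi_Y$ coincides with the motivic first Chern class $c_1^\HZ \colon \Gamma(Y, \sO^*) \to H^{1,1}(Y)$, so
\[
\rho_\HZ\bigl(\partial_{\A^1,0}\psi(x|_{\G_m})\bigr) = \partial_{\A^1,0}\bigl(c_1^\HZ(x|_{\G_m})\bigr) \in H^{2,1}_0(\A^1).
\]
The right-hand side is a classical computation in motivic cohomology: the boundary of the first Chern class of the coordinate function is the fundamental class of its zero scheme, i.e.\ the generator $+1$ of $H^{2,1}_0(\A^1) \cong \Z$ (this is essentially the tame symbol / purity at a smooth divisor). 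On the other hand, $\rho_\HZ$ induces an isomorphism $\MGL^{2,1}_0(\A^1) \to H^{2,1}_0(\A^1)$ of cyclic groups $\Z \to \Z$ sending the $\MGL$-generator to the $\HZ$-generator, because by definition of the orientation it sends $c_1^\MGL(\sO(1))$ to $c_1^\HZ(\sO(1))$. Hence $\partial_{\A^1,0}\psi(x|_{\G_m}) = +1 = c_1^0(\sO_{\A^1}, x)$, and pulling back by $f$ yields the lemma.

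The main technical obstacle is orientation bookkeeping: making sure the distinguished generators used in Example~\ref{ex:main}, in the construction of $\psi$, and in the motivic boundary calculation all agree. This reduces to noting that the map $\psi \colon \P^\infty \to \MGL_1$ used to define $\psi_X$ is, by construction, exactly the map inducing the oriented first Chern class on $\MGL$, so after passing to $\HZ$ the computation of $\partial\psi(x|_{\G_m})$ becomes the residue of the motivic symbol $\{x\}$, which is universally $+1$.
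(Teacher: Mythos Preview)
Your reduction to the universal case $(\A^1,0,x)$ via naturality is exactly how the paper begins, and that step is fine. From there, however, the two arguments diverge.

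The paper does \emph{not} pass to motivic cohomology. Instead, it computes $\partial_{\A^1,0}(\psi(x))$ directly by unwinding the Puppe sequence for the pair $(\A^1,\G_m)$ in $\sH_\bullet(k)$. In the pointed version, the collapse map
\[
\bar\delta:\A^1\cup_{\G_m\times1}\G_m\times\A^1/(\G_m\times0\cup 1\times\A^1)\longrightarrow (\A^1/\{0,1\})\wedge\G_m
\]
is an isomorphism, both sides are identified with $(\P^1,(1{:}0))$, and composing with the map $\P^1\to\P^\infty\xrightarrow{\text{0-section}}\MGL_1$ that defines $\bar\psi(x)$ one reads off literally the class $c_1(\sO(1))$, which is $c_1^0(\sO_{\A^1},x)$ by Example~\ref{ex:main}. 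So the paper's proof is an identification of two explicit maps in $\sH_\bullet(k)$, with no auxiliary theory invoked.

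Your route---push along $\rho_\HZ$, use Lemma~\ref{lem:units} to turn $\psi$ into $c_1^\HZ$, compute the boundary as the tame symbol in motivic cohomology, and then pull back along the isomorphism $\rho_\HZ:\MGL^{2,1}_0(\A^1)\to H^{2,1}_0(\A^1)$---is logically sound. The points you need (naturality of $\partial$ under a map of $T$-spectra, and that $\rho_\HZ$ sends $c_1^\MGL(\sO(1))$ to $c_1^\HZ(\sO(1))$, hence is an isomorphism on these rank-one groups) are all correct. What you gain is that the final numerical check is outsourced to a standard fact about $H^{*,*}$; what you lose is self-containment, since the ``classical'' identity $\partial_{\A^1,0}([x])=1$ in $H^{2,1}_0(\A^1)$ is itself proved by essentially the same Puppe/cofiber analysis the paper carries out for $\MGL$. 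In other words, your argument is correct but defers the actual geometric content to a citation, whereas the paper handles it in place.
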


\begin{proof} Both sides of the equations are natural in $(X,f)$, so it suffices to handle the universal case $X=\A^1$, $f=\id$. Writing $\A^1=\Spec k[x]$, $u$ is just the canonical unit $x$ on $U=\A^1\setminus\{0\}$. The boundary map
\[
\partial:\MGL^{1,1}(U)\to\MGL^{2,1}_0(\A^1)
\]
is induced by the map $\delta$ in the Puppe sequence
\[
U\to \A^1\to \A^1\cup_{U\times1}U\times\A^1/U\times0\xrightarrow{\delta}(\A^1/\{0,1\})\wedge U_+
\]
Noting that $\psi(u)$ descends canonically to an element in reduced cohomology
\[
\bar\psi(u)\in \MGL^{1,1}(U,1)=\MGL^{1,1}(\G_m),
\]
we can use the pointed version of the Puppe sequence
\[
\G_m\to (\A^1,1)\to \A^1\cup_{U\times1}U\times\A^1/(U\times0\cup 1\times\A^1)\xrightarrow{\bar\delta}(\A^1/\{0,1\})\wedge \G_m
\]
Then $\bar\delta$ is an isomorphism in $\sH_\bullet(k)$, and both terms are isomorphic to $(\P^1,(1:0))$. The map
\[
\Sigma_T^\infty (\A^1/\{0,1\})\wedge \G_m\to S^{2,1}\wedge\MGL
\]
representing $\psi(u)$ is by definition the map induced by the canonical inclusion
\[
\P^1\to\P^\infty
\]
followed by the map $\P^\infty\to\MGL_1$ induced by the zero-section $\P^\infty\to\sO(1)$, and pre-composed with the canonical isomorphism
\[
(\A^1/\{0,1\})\wedge \G_m\cong (\P^1,(1:0))
\]
in $\sH_\bullet$. But we have already seen that this gives us $c_1(\sO(1))\in\MGL^{2,1}(\P^1)$, which is the same as $c_1^0\in\MGL^{2,1}_0(\A^1)$.
\end{proof}

Suppose the $D\subset X$ is the support of a strict normal crossing divisor, that is, if $D$ has irreducible components $D_1,\ldots, D_m$, then for each  $I\subset\{1,\ldots, m\}$, the intersection
\[
D_I:=\cap_{i\in I}D_i
\]
is smooth and has codimension $|I|$ on $X$; we call the $D_I$ the {\em strata} of $D$. For $(L,s)\in\Pic^\eff_D(X)$, let $\Div(s)=\sum_in_iD_i$ denote the usual divisor, that is, $n_i$ is the order of vanishing of $s$ along $D_i$. We have defined in \cite[definition 3.1.5]{LevineMorel} the {\em cobordism divisor class}
\[
\Div^\Omega(s)\in\Omega_{\dim X-1}(D).
\]
with the following properties:
\begin{enumerate}
\item Let $i:D\to X$ be the inclusion. Then
\[
i_*(\Div^\Omega(s))=c_1(L):=\tilde{c}_1(L)(1_X)\in\Omega_{\dim X-1}(X).
\]
\item Write $D=D_1\cup\ldots\cup D_r$, with each $D_i$ a smooth codimension one closed subscheme of $X$, such that $\Div(s)$ can be written as $\Div(s)=\sum_{i=1}^rn_iD_i$. Let  $\xi_i= \tilde{c}_1(\sO_X(D_i))$. Then for each non-empty $I\subset\{1,\ldots, r\}$ there are universal power series 
\[
G_I(u_1,\ldots, u_r)\in \Omega_*(k)[[u_1,\ldots, u_r]]
\]
such that
\[
\Div^\Omega(s)=\sum_I\iota_{I*}[G_I(\xi_1,\ldots\xi_r)([D_I])],
\]
where $D_I=\cap_{i\in I}D_i$,  $\iota_I:D_I\to D$ is the inclusion, and $[D_I]\in\Omega_*(D_I)$ is the class of $\id_{D_I}$. 
\end{enumerate}

\begin{rem} Let $X, D,s,L$ be as above. In \cite{LevineMorel}, we used the notation $[\Div(s)\to D]\in\Omega_*(D)$ for $\Div^\Omega(s)$.
\end{rem}

\begin{lem}\label{lem:DivNat} Let $(X,D)$ be in $\SP$, such that $D$ is a reduced strict normal crossing divisor on $X$. Let $f:X'\to X$ be a morphism in $\Sm/k$. Suppose that $f$ is transverse to the inclusion $D_I\to X$ for each stratum $D_I$ of $D$. Let $(H,E)$ be a bi-graded oriented duality theory on $\Sch_k$, $\vartheta_H:\Omega_*\to H_{2*,*}$ the natural transformation given by \cite[proposition 4.2]{LevineOrient}. Letting $D'=f^{-1}(D)$, we have the maps
\begin{align*}
&\alpha_{X,D}\circ \vartheta_H(D):\Omega_{d_X-1}(D)\to E^{2,1}_D(X)\\
&\alpha_{X',D'}\circ \vartheta_H(D'):\Omega_{d_{X'}-1}(D')\to E^{2,1}_{D'}(X')\\
&f^*:E^{2,1}_D(X)\to E^{2,1}_{D'}(X')
\end{align*}
Then for $(L,s)\in \Pic^\eff_D(X)$, we have $(f^*(L),f^*(s))\in \Pic^\eff_{D'}(X')$ and
\[
f^*(\alpha_{X,D}\circ \vartheta_H(D)(\Div^\Omega(s)))=\alpha_{X',D'}\circ \vartheta_H(D')(\Div^\Omega(f^*(s)))).
\]
\end{lem}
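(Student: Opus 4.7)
The plan is to use the explicit stratum-wise description of $\Div^\Omega(s)$ (property (2) recalled just before the lemma) for both $s$ and $f^*s$, apply $\alpha_{X,D}\circ \vartheta_H(D)$ to convert each expression into a sum of pushforwards of Chern class products in $E$-cohomology with supports, and then compare term-by-term using the transversality hypothesis.

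First I would invoke property (2) to write
$$\Div^\Omega(s) = \sum_I \iota_{I*}\bigl[G_I(\xi_1,\ldots,\xi_r)([D_I])\bigr] \in \Omega_{d_X-1}(D),$$
with $\xi_i = \tilde{c}_1(\sO_X(D_i))$ and universal $G_I \in \Omega_*(k)[[u_1,\ldots,u_r]]$. Transversality of $f$ with each stratum $D_I$ ensures that $D' = f^{-1}(D)$ is a strict normal crossing divisor with components $D'_i = f^{-1}(D_i)$, strata $D'_I = f^{-1}(D_I)$, that $f^*\sO_X(D_i) \cong \sO_{X'}(D'_i)$, and that $\Div(f^*s) = \sum_i n_i D'_i$ has the same multiplicities as $\Div(s)$. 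Universality of the $G_I$ then yields
$$\Div^\Omega(f^*s) = \sum_I \iota'_{I*}\bigl[G_I(\xi'_1,\ldots,\xi'_r)([D'_I])\bigr], \qquad \xi'_i := \tilde{c}_1(\sO_{X'}(D'_i)).$$

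Next I would apply $\alpha_{X,D}\circ\vartheta_H(D)$. The natural transformation $\vartheta_H$ is compatible with projective pushforward and with first Chern class operators (\cite[proposition 4.2, lemma 4.3]{LevineOrient}), and sends each tautological class $[D_I]$ to the Borel--Moore fundamental class $[D_I]_H$; the comparison isomorphism $\alpha$ commutes with projective pushforward (the defining square of a duality theory) and satisfies $\alpha_{D_I,D_I}([D_I]_H) = 1_{D_I}$. Stringing these together gives
$$\alpha_{X,D}\circ\vartheta_H(D)(\Div^\Omega(s)) = \sum_I \iota_{I*}\bigl(G_I^E(c_1\sO_X(D_1),\ldots)\cup 1_{D_I}\bigr) \in E^{2,1}_D(X),$$
where each $\iota_I$ is viewed as a morphism $(D_I,D_I)\to(X,D)$ in $\SP'$, $G_I^E$ denotes the power series whose coefficients are transported from $\Omega_*(k)$ to $E^{*,*}(k)$ via $\vartheta^E$, and an identical formula holds on $(X',D')$.

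Finally, transversality of $f$ with every stratum $D_I$ makes each base-change square
$$
\xymatrix{
(X',D'_I) \ar[r] \ar[d] & (X,D_I) \ar[d] \\
(X',D') \ar[r]^{f} & (X,D)
}
$$
transverse cartesian in $\SP'$; the integration-with-supports axiom then gives $f^*\circ\iota_{I*} = \iota'_{I*}\circ (f|_{D'_I})^*$, and coupled with $f^*c_1(\sO_X(D_i)) = c_1(\sO_{X'}(D'_i))$ and $f^*(1_{D_I}) = 1_{D'_I}$, a term-by-term comparison yields the lemma. The main technical point I anticipate is verifying that the universal formula for $\Div^\Omega$ is itself compatible with transverse pullback — that is, that the same power series $G_I$ appear on both sides after one identifies $D'_I$ with $f^{-1}(D_I)$; the subsequent transport through $\vartheta_H$, $\alpha$, and the transverse base-change axiom is then formal.
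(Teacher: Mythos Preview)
Your proposal is correct and follows essentially the same route as the paper: write $\Div^\Omega(s)$ and $\Div^\Omega(f^*s)$ via the universal stratum formula (2), push these through $\alpha\circ\vartheta_H$ using compatibility with projective push-forward, Chern class operators, and units, and then match terms using the transverse base-change identity $f^*\iota_{I*}=\iota'_{I*}f_I^*$ from the integration-with-supports axioms. The only point you flag as requiring care---that the same $G_I$ govern both sides---is exactly what the paper addresses by observing that transversality forces $\Div(f^*s)=\sum_i n_i D'_i$ with the same multiplicities, so universality of the $G_I$ applies verbatim.
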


\begin{proof} Write $D=D_1\cup\ldots \cup D_r$ with each $D_i$ a smooth codimension one closed subscheme of $X$, such that $\Div(s)$ can be written as $\Div(s)=\sum_{i=1}^rn_iD_i$.  Let $D_i'=f^{-1}(D_i)$. Then  $D'$ is a reduced  strict normal crossing divisor on $X'$, 
$D'=D'_1\cup\ldots\cup D'_r$, each $D'_i$ is a smooth codimension one closed subscheme of $X'$ (or is empty), and $\Div(f^*(s))=\sum_in_iD'_i$. Thus, letting $\xi_i=\tilde{c}_1(\sO_X(D_i))$, $\xi_i'=\tilde{c}_1(\sO_{X'}(D_i'))$, $s'=f^*(s))$, we have
\begin{align}
&\Div^\Omega(s)=\sum_I\iota_{I*}[G_I(\xi_1,\ldots\xi_r)([D_I])]\label{align:1}\\
&\Div^\Omega(s')=\sum_I\iota_{I*}[G_I(\xi'_1,\ldots\xi'_r)([D'_I])].\label{align:2}
\end{align}
The maps $\alpha_{X,D}\circ \vartheta_H(D)$ are natural with respect to projective push-forward and commute with the respect first Chern class operators, hence
\[
\alpha_{X,D}\circ \vartheta_H(D)(\iota_{I*}[G_I(\xi_1,\ldots\xi_r)([D_I])])=
\iota_{I*}[G_I(\xi_1,\ldots\xi_r)(\alpha_{X,D_I}\circ \vartheta_H(D_I)([D_I]))]
\]
and similarly for $X', D', s'$. Furthermore, since $D_I$ and $D'_I$ are smooth, we have
\[
\iota_{I*}[G_I(\xi_1,\ldots\xi_r)(\alpha_{X,D_I}\circ \vartheta_H(D_I)([D_I]))]=
\iota_{I*}[G_I(\xi_1,\ldots\xi_r)(\vartheta^E(D_I)(1^\Omega_{D_I}))]
\]
and similarly for $D'_I$. Here $1^\Omega_{D_I}\in\Omega^*(D_I)$ is the unit. Finally, since 
$\vartheta^E$ is a natural transformation of oriented cohomology theories on $\Sm/k$, we have
$\vartheta^E(D_I)(1^\Omega_{D_I})=1^E_{D_I}$, and similarly for $D'_I$. Putting these identities together gives
\begin{align}
&\alpha_{X,D}\circ \vartheta_H(D)(\iota_{I*}[G_I(\xi_1,\ldots\xi_r)([D_I])])
=\iota_{I*}[G_I(\xi_1,\ldots\xi_r)(1_{D_I})]\label{align:3}\\
&\alpha_{X',D'}\circ \vartheta_H(D')(\iota_{I*}[G_I(\xi'_1,\ldots\xi'_r)([D'_I])])
=\iota_{I*}[G_I(\xi'_1,\ldots\xi'_r)(1_{D'_I})].\label{align:4}
\end{align}

Let $f_I:D'_I\to D_I$ be the restriction of $f$. Since the diagram
\[
\xymatrix{
D'_I\ar[d]_{\iota'_I}\ar[r]^{f_I}&D_I\ar[d]^{\iota_I}\\
X'\ar[r]_f&X
}
\]
is transverse, the diagram
\[
\xymatrix{
E^{a,b}(D'_I)\ar[d]_{\iota'_{I*}}&E^{a,b}(D_I)\ar[l]_{f_I^*}\ar[d]^{\iota_{I*}}\\
E^{2|I|+a,|I|+b}_{D'}(X')&E^{2|I|+a,|I|+b}_D(X)\ar[l]^{f^*}
}
\]
commutes (see \cite[lemma 1.7]{LevineOrient}). Similarly, $f_I^*$  commutes with the respective first Chern class operators, so
\[
f_I^*\circ G_I(\xi_1,\ldots\xi_r)=G_I(\xi'_1,\ldots\xi'_r)\circ f_I^*.
\]
Finally, $f_I^*:E^{*,*}(D_I)|to E^{*,*}(D'_I)$ is a ring homomorphism, so
\[
f_I^*(1_{D_I})=1_{D'_I}.
\]
Together with the identities \eqref{align:1}, \eqref{align:2}, \eqref{align:3}, \eqref{align:4}, this completes the proof.
\end{proof}

\begin{lem}\label{lem:ChenClassDiv} Take $X\in\Sm/k$, $D$ a strict normal crossing divisor on $X$ and $(L,s)\in\Pic^\eff_D(X)$. Then
\[
c_1^D(L,s)=\alpha_{X,D}(\vartheta _{\MGL'}(D)(\Div^\Omega(s)),
\]
where $\alpha_{X,D}:\MGL_{2\dim X-2,\dim X-1}(D)\to \MGL^{2,1}_{D}(X)$ is the comparison isomorphism.
\end{lem}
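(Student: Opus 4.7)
The plan is to reduce both sides of the claimed equality to an identical universal expression in terms of the power series $G_I$, and then to dispatch the resulting base case via Lemma~\ref{lem:BoundaryComp} and Example~\ref{ex:main}. Write $\tau(L,s) := \alpha_{X,D}(\vartheta_{\MGL'}(D)(\Div^\Omega(s)))$. Running the computation from the proof of Lemma~\ref{lem:DivNat} with $f=\id_X$, and using that $\vartheta^\MGL$ is a morphism of oriented cohomology theories, the defining formula for $\Div^\Omega(s)$ yields
\[
\tau(L,s)=\sum_I \iota_{I*}\bigl[G_I(\xi_1,\ldots,\xi_r)(1^{\MGL}_{D_I})\bigr]\in\MGL^{2,1}_D(X),
\]
where now $\xi_i=\tilde{c}_1(\sO_X(D_i))$ is taken in $\MGL$-cohomology and the $G_I$ are the same universal power series appearing in the definition of $\Div^\Omega$. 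The task is to derive an identical expression for $c_1^D(L,s)$.

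First, both $c_1^D$ and $\tau$ are natural with respect to morphisms $X'\to X$ in $\Sm/k$ transverse to every stratum $D_I\subset X$ (for $c_1^D$ this is built into the construction via compatibility with the cofiber sequence and pull-back of first Chern classes; for $\tau$ this is Lemma~\ref{lem:DivNat}), and both are compatible with Jouanolou's trick. Second, decompose in the monoid $\Pic^\eff_D(X)$,
\[
(L,s)=\bigotimes_{i=1}^r (\sO_X(D_i), s_{D_i})^{\otimes n_i},
\]
where $s_{D_i}$ cuts out $D_i$ with multiplicity one, and establish a multiplicativity formula for $c_1^D$: for $(L_j,s_j)\in\Pic^\eff_{D_j}(X)$ with $D_1,D_2$ meeting transversely, $c_1^{D_1\cup D_2}(L_1\otimes L_2, s_1\otimes s_2)$ is expressible, via the formal group law $F_\MGL$, in terms of $c_1^{D_j}(L_j,s_j)$ and their cup products pushed forward from $\MGL^{*,*}_{D_1\cap D_2}(X)$. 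Iterating this together with the $n$-series $[n]_{F_\MGL}$ produces an expression for $c_1^D(L,s)$ of the same shape as for $\tau$, with universal power series $H_I$ in the $\xi_i$. Comparing the two expressions in the universal model $X=\prod_i \P^{N_i}$ with $D_i$ the pullback of a coordinate hyperplane from the $i$-th factor, where $\vartheta^\MGL$ and the projective bundle formula are explicit, forces $H_I=G_I$.

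What remains is the base case $D$ smooth, $L=\sO_X(D)$, $s=s_D$. Locally on $X$ pick a regular function $f\colon V\to\A^1$ with $D\cap V=f^{-1}(0)$; Lemma~\ref{lem:BoundaryComp} identifies $c_1^{D\cap V}(\sO_V,f)=\partial_{V,D\cap V}(\psi(f|_{V\setminus D}))$, while Example~\ref{ex:main} together with the definition of the fundamental class $[D\cap V]_{\MGL'}$ identifies this with $\alpha_{V,D\cap V}(\vartheta_{\MGL'}([D\cap V]))$; Zariski patching via naturality of $\alpha$ then globalizes the identity to $X$. The main obstacle is the second step above: precisely formulating and proving the formal-group-law formula for $c_1^D$ of a tensor product of pseudo-divisors with distinct smooth supports, so that the iterated universal power series $H_I$ on the $\MGL$-side provably coincide with the $G_I$ on the $\Omega$-side. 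Technically this amounts to a Thom-space level analysis of cup products of the Thom classes of the $\sO_X(D_i)$, paralleling the classical derivation of the formal group law for $c_1(L\otimes M)$.
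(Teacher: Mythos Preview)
Your proposal takes a genuinely different route from the paper, and the difference is instructive.

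The paper, like you, uses naturality with respect to morphisms transverse to the strata (Lemma~\ref{lem:DivNat}) and Jouanolou's trick to reduce to $X$ affine. It then writes $(L,s)$ as a tensor product of line bundles $L_i=\sO_X(D_i)$ with their canonical sections and, since $X$ is affine, realizes everything by a map $g:X\to\prod_i\P^N$ transverse to the coordinate hyperplane arrangement $H$. But at this point the paper does \emph{not} attempt to prove any multiplicativity formula for $c_1^D$. Instead it invokes a single clean fact: for $X=\prod_i\P^N$ with $D=H$, the forget-supports map
\[
\MGL^{2*,*}_H\Bigl(\prod_i\P^N\Bigr)\longrightarrow \MGL^{2*,*}\Bigl(\prod_i\P^N\Bigr)
\]
is injective (\cite[lemma 5.2.11]{LevineMorel}). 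This lets one drop the supports entirely, after which the identity becomes $c_1^\MGL(L)=\vartheta^\MGL(i_{H*}\Div^\Omega(s))$ in ordinary $\MGL^{2,1}$, which is immediate from $c_1^\Omega(L)=i_{H*}\Div^\Omega(s)$ (\cite[proposition 3.1.9]{LevineMorel}) and the fact that $\vartheta^\MGL$ respects first Chern classes. No base case, no iterated formal-group-law expansion, no comparison of power series $H_I$ versus $G_I$ is needed.

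Your approach is not wrong in spirit, but the step you yourself flag as the main obstacle---a supported formal-group-law identity expressing $c_1^{D_1\cup D_2}(L_1\otimes L_2,s_1\otimes s_2)$ in terms of the $c_1^{D_j}(L_j,s_j)$ and their products landing in $\MGL^{*,*}_{D_1\cap D_2}(X)$---is real work that the paper simply sidesteps. Making that precise requires a Thom-class analysis with supports that is essentially as hard as the lemma itself. Moreover, your separate ``base case'' argument via Lemma~\ref{lem:BoundaryComp} and Example~\ref{ex:main} is unnecessary once you have the injectivity trick; in the paper Lemma~\ref{lem:BoundaryComp} is used only later, in Proposition~\ref{prop:Boundary}, not here. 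The moral: when both sides are natural for transverse pullback and the universal model has injective forget-supports, reduce and forget rather than build up a supported multiplicative calculus.
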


\begin{proof} With the help of lemma~\ref{lem:DivNat}, both sides are natural with respect to morphisms that are transverse to all the strata of $D$. Thus,   we may use Jouanoulou's trick to reduce to the case in which $X$ is affine. Write $D$ as a union of irreducible components
\[
D=D_1\cup\ldots\cup D_m
\]
and write $\Div(s)=\sum_in_iD_i$, $n_i\ge0$. Letting $L_i=\sO_X(D_i)$, we therefore have
\[
L\cong L_1^{\otimes n_1}\otimes\ldots\otimes L_m^{\otimes n_m}
\]
In addition, there are sections $s^{(i)}$ of $L_i$, and a unit $u$ on $X$ such that the divisor of $s_i$ (as a cycle on $X$) is $D_i$ and with
\[
s=u\cdot (s^{(1)}_1)^{n_1}\otimes\ldots\otimes (s^{(m)})^{n_m}.
\]
As $(L,s)\cong (L,u^{-1}s)$ and $\Div^\Omega(s)=\Div^\Omega(u^{-1}s)$, we may assume $u=1$. As $X$ is affine, each $L_i$ is generated by global sections, and we can find a set of generating sections of $L_i$ of the form
\[
s^{(i)}=s^{(i)}_0,\ldots, s^{(i)}_N.
\]
Thus, we have a morphism
\[
g:X\to\prod_{i=1}^m\P^N
\]
such that $g^*(p_i^*(\sO(1)))\cong L_i$ and with $g^*(X_0^{(i)})=s^{(i)}$. Here $p_i:\prod_{i=1}^m\P^N\to\P^N$ is the projection on the $i$th factor,  $X_0,\ldots, X_N$ are the standard coordinates on $\P^N$, and $X^{(i)}_j:=p_i^*(X_j)$. 

Let $H_i\subset \prod_{i=1}^m\P^N$ denote the subscheme defined by $X_0^{(i)}=0$, and $H=H_1\cup\ldots\cup H_m$. Since $g$ is transverse to all the strata of $H$, we may use lemma~\ref{lem:DivNat} to reduce us to the case $X=\prod_{i=1}^m\P^N$, $D=H$, $s=\prod_{i=1}^m(X_0^{(i)})^{n_i}$ and $L=\bigotimes_{i=1}^mp_i^*(\sO(1))^{\otimes n_i}$. In this case, the natural map
\[
\MGL^{2*,*}_H(\prod_{i=1}^m\P^N)\to \MGL^{2*,*}(\prod_{i=1}^m\P^N)
\]
is injective (see \cite[lemma 5.2.11]{LevineMorel}), so it suffices to see that
\[
c_1^\MGL(\bigotimes_{i=1}^mp_i^*(\sO(1))^{\otimes n_i})=\vartheta ^\MGL(i_{H*}(\Div^\Omega(s)))\in \MGL^{2,1}(\prod_{i=1}^m\P^N).
\]
But by \cite[proposition 3.1.9]{LevineMorel}
\[
c_1^\Omega(\bigotimes_{i=1}^mp_i^*(\sO(1))^{\otimes n_i})= i_{H*}(\Div^\Omega(s))\in \Omega^1(\prod_{i=1}^m\P^N).
\]
Since $\vartheta ^\MGL$ is compatible with the respective Chern class maps, this completes the proof.
\end{proof}

\begin{prop} \label{prop:Boundary} Let $f:X\to\P^1$ be a dominant morphism with $X\in\Sm/k$. Let $D_0=f^{-1}(0)$, $D_\infty=f^{-1}(\infty)$, $D=D_0\amalg D_\infty$, $U=X\setminus D$, $u=f_{|U}\in\Gamma(U,\sO^*)$. Write $\partial_{X,D}(u)\in\MGL^{2,1}_D(X)$ as a sum
\[
\partial_{X,D}(u):=\partial^0_{X,D}(u)-\partial^\infty_{X,D}(u);\quad \partial^0_{X,D}(u)\in\MGL^{2,1}_{D_0}(X), 
 \partial^\infty_{X,D}(u)\in\MGL^{2,1}_{D_\infty}(X), 
\]
according to the canonical direct sum decomposition
\[
\MGL^{2,1}_D(X)=\MGL^{2,1}_{D_0}(X)\oplus \MGL^{2,1}_{D_\infty}(X).
\]
Let $f_0=f^*(X_1)\in\Gamma(X,f^*\sO(1))$ and $f_\infty=f^*(X_0)\in \Gamma(X,f^*\sO(1))$. Suppose that $D$ is a strict normal crossing divisor. Then
\begin{align*}
&\partial^0_{X,D}(\psi(u))=\alpha_{X,D_0}(\vartheta _{\MGL'}(D_0)(\Div^\Omega(f_0))\\
&\partial^\infty_{X,D}(\psi(u))=\alpha_{X,D_\infty}(\vartheta _{\MGL'}(D_\infty)(\Div^\Omega(f_\infty)).
\end{align*}
\end{prop}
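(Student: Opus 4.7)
The plan is to derive the formula from Lemma~\ref{lem:BoundaryComp} and Lemma~\ref{lem:ChenClassDiv} by excising $D_\infty$ (resp.\ $D_0$) and working on each of the two standard affine charts on $\P^1$. Set $X_0:=X\setminus D_\infty$ and $X_\infty:=X\setminus D_0$; since $D_0\cap D_\infty=\emptyset$, excision gives isomorphisms $\MGL^{2,1}_{D_0}(X)\cong \MGL^{2,1}_{D_0}(X_0)$ and $\MGL^{2,1}_{D_\infty}(X)\cong \MGL^{2,1}_{D_\infty}(X_\infty)$, and naturality of $\partial$ with respect to open restriction identifies the $D_0$- (resp.\ $D_\infty$-)component of $\partial_{X,D}(\psi(u))$, before any sign flip, with $\partial_{X_0,D_0}(\psi(u))$ (resp.\ $\partial_{X_\infty,D_\infty}(\psi(u))$).

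For the $D_0$-piece, identify $\A^1$ with $\P^1\setminus\{\infty\}$ via the coordinate $t=X_1/X_0$. Then $f|_{X_0}\colon X_0\to\A^1$ is dominant with $(f|_{X_0})^{-1}(0)=D_0$ and restricts to $u$ on $U$, so Lemma~\ref{lem:BoundaryComp} gives $\partial_{X_0,D_0}(\psi(u))=c_1^{D_0}(\sO_{X_0},f|_{X_0})$. The section $f_\infty=f^*(X_0)$ is nowhere vanishing on $X_0$ and trivializes $f^*\sO(1)|_{X_0}$; under this trivialization $f_0=f^*(X_1)$ corresponds to the function $f|_{X_0}$, so naturality of $c_1^{(-)}$ identifies $c_1^{D_0}(\sO_{X_0},f|_{X_0})$ with the excision image of $c_1^{D_0}(f^*\sO(1),f_0)\in\MGL^{2,1}_{D_0}(X)$. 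Lemma~\ref{lem:ChenClassDiv} applied to the pseudo-divisor $(f^*\sO(1),f_0)\in\Pic^\eff_{D_0}(X)$ then produces the desired formula for $\partial^0_{X,D}(\psi(u))$.

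For the $D_\infty$-piece, use the complementary chart $\A^1\cong\P^1\setminus\{0\}$ with coordinate $s=X_0/X_1$. The map $g:=(s\circ f)|_{X_\infty}\colon X_\infty\to\A^1$ is dominant with $g^{-1}(0)=D_\infty$ and restricts to $u^{-1}$ on $U$, so Lemma~\ref{lem:BoundaryComp} gives $\partial_{X_\infty,D_\infty}(\psi(u^{-1}))=c_1^{D_\infty}(\sO_{X_\infty},g)$. To pass from $u^{-1}$ to $u$, note that $\psi$ is a group homomorphism from the multiplicative group $\Gamma(X,\sO_X^*)$ to the additive group $\MGL^{1,1}(X)$: identifying $\MGL^{1,1}(X)$ with $\MGL^{2,1}(\Sigma_s X_+)$, the class $\psi(u)$ is $c_1$ of the relative line bundle associated to $u$, and since the reduced diagonal of a suspension is null-homotopic, all cup products in $\MGL^{*,*}(\Sigma_s X_+)$ vanish, so the formal group law collapses to the additive law. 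Hence $\psi(u^{-1})=-\psi(u)$, and this sign cancels the one in the definition $\partial^\infty_{X,D}:=-(\text{$D_\infty$-component})$, giving $\partial^\infty_{X,D}(\psi(u))=c_1^{D_\infty}(\sO_{X_\infty},g)$. Trivializing $f^*\sO(1)|_{X_\infty}$ by the nowhere-zero section $f_0|_{X_\infty}$ identifies $f_\infty$ with $g$, and Lemma~\ref{lem:ChenClassDiv} applied to $(f^*\sO(1),f_\infty)\in\Pic^\eff_{D_\infty}(X)$ finishes the proof.

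The main obstacle is the sign reconciliation $\psi(u^{-1})=-\psi(u)$: the additivity of $\psi$ into $\MGL^{1,1}$ is not immediate from its construction as a map of pointed spaces but depends on the vanishing of cup products in the cohomology of a suspension, which forces the formal group law governing the relative first Chern class to degenerate. Apart from this, the argument is essentially bookkeeping with the two trivializations of $f^*\sO(1)$, the naturality of $c_1^{(-)}$ and of $\alpha_{-,-}$, and the compatibility of $\partial$ with open restriction.
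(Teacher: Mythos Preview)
Your proof is correct and follows essentially the same route as the paper: excise the other component to reduce to a regular function into $\A^1$, apply Lemma~\ref{lem:BoundaryComp} to identify the boundary with a relative $c_1$, and then invoke Lemma~\ref{lem:ChenClassDiv}. The paper handles the $D_\infty$-piece in exactly the same way---by passing to $u^{-1}$ and observing that this swaps the roles of $0$ and $\infty$---and simply asserts that $\psi$ is a group homomorphism (as stated at its definition in \S\ref{sec:Fields}), whereas you supply an explicit justification via the vanishing of cup products on a suspension; this extra detail is fine but not strictly needed here.
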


\begin{proof} We first verify the formula for $\partial^0_{X,D}(\psi(u))$. By excision, we can replace $X$ with $X\setminus D_\infty$, so we may assume that $f$ is a regular function on $X$, $f:X\to\A^1$. The formula for $\partial^0_{X,D}(\psi(u))$ then follows from lemma~\ref{lem:BoundaryComp}  and lemma~\ref{lem:ChenClassDiv}.

The formula for $\partial^\infty_{X,D}(\psi(u))$ follows from the formula for $\partial^0_{X,D}(\psi(u))$. Indeed, $\psi$ and 
$\partial_{X\setminus D_0,D_\infty}$ are group homomorphisms, so
\[
\partial^\infty_{X,D}(\psi(u))=-\partial_{X\setminus D_0, D_\infty}(\psi(u))=\partial_{X\setminus D_0,D_\infty}(\psi(u^{-1})).
\]
However, if we replace $u$ with $u^{-1}$, this switches the roles of $D_0$ and $D_\infty$, and of $f_0$ and $f_\infty$. Thus the case we have already handled shows that
\[
\partial_{X\setminus D_0,D_\infty}(\psi(u^{-1}))=\alpha_{X,D_\infty}(\vartheta _{\MGL'}(D_\infty)(\Div^\Omega(f_\infty)),
\]
which completes the proof.
\end{proof}

\section{Proof of the main theorem}\label{sec:Proof} We are now ready to prove theorem~\ref{thm:main}. We proceed by induction on the maximal dimension of an irreducible component of $X$; the case of dimension 0 has been verified in lemma~\ref{lem:surj}. So, write  $X$ as a union of closed subsets
\[
X=X_1\cup\ldots\cup X_r\cup X'
\]
with $X_1,\ldots, X_r$ irreducible of dimension $d$ and $X'$ of dimension $<d$. Let $X_{(d)}=x_1\amalg\ldots\amalg x_r$ be the dimension $d$ generic points of $X$. Set
\[
\Omega_*^{(1)}(X):=\colim_W\Omega_*(W)
\]
as $W\subset X$ runs over the closed subsets containing no $x_i$ and set
\[
\MGL_{2*,*}^{(1)}(X):=\colim_W\MGL'_{2*,*}(W)
\]
over the same system of $W$. Taking the limit of the respective localization sequences gives us the commutative diagram with exact columns:
\[
\xymatrixcolsep{20pt}
\xymatrix{
&\oplus_i\MGL'_{2*+1,*}(k(x_i))\ar[d]^\partial\\
\Omega_*^{(1)}(X)\ar[r]^-{\vartheta^{(1)}(X)}\ar[d]_{i_*}&
\MGL^{(1)}_{2*,*}(X)\ar[d]^{i_*}\\
\Omega_*(X)\ar[r]^-{\vartheta(X)}\ar[d]_{j^*}&
\MGL'_{2*,*}(X)\ar[d]^{j^*}\\
\oplus_i\Omega_*(k(x_i))\ar[r]_-{\oplus_i\vartheta(x_i)}\ar[d]&
\oplus_i\MGL'_{2*,*}(k(x_i))\ar[d]\\
0&0
}
\]

We have already seen (lemma~\ref{lem:surj}) that the maps $\vartheta(x_i)$ are isomorphisms; the map 
$\vartheta^{(1)}(X)$ is an isomorphism by our induction hypothesis. This already implies that
$\vartheta(X)$ is surjective.

To show that $\vartheta(X)$ is injective, let $\Z[k(x_i)^\times]$ be the free abelian group on $k(x_i)^\times$. By lemma~\ref{lem:surj}, we have a surjection
\[
\psi_i:\Z[k(x_i)^\times]\otimes \L_*\to \MGL_{2*+2d-1, *+d-1}'(k(x_i)).
\]
Thus, it suffices to define for each $i$ a map
\[
\Div^\Omega_i:\Z[k(x_i)^\times]\otimes \L_*\to \Omega_{*+d-1}^{(1)}(X)
\]
making the diagram 
\begin{equation}\label{eqn:MainDiag}
\xymatrix{
\Z[k(x_i)^\times]\otimes \L_*\ar[r]^-{\alpha^{-1}_{k(x_i)}\circ\psi_i}\ar[d]_{\Div^\Omega_i}& \MGL_{2*+2d-1, *+d-1}'(k(x_i))\ar[d]^{\partial_i}\\
\Omega_{*+d-1}^{(1)}(X)\ar[r]_-{\vartheta^{(1)}(X)}&\MGL^{(1)}_{2*+2d-2,*+d-1}(X)
}
\end{equation}
commute, and with
\[
i_*\circ \Div^\Omega_i=0.
\]
Since, for $a\in k(x_i)^\times$, $\alpha\in \MGL'_{2d-2,d-1}(k(x_i))=\MGL^{1,1}(k(x_i))$,  $b\in\L_*$, we have
\[
\psi_i(a\otimes b)=\psi_i(a)\cup b; \partial_i(\alpha\cup b)=\partial(\alpha)\cup b,
\]
it suffices to define $\Div^\Omega_i$ on $k(x_i)^\times$ so that the diagram commutes and with $i_*\circ\Div^\Omega_i=0$ on $k(x_i)$; we then simply extend by linearity and by using the $\L_*$-module structure.

Take $u\in k(x_i)^\times$. Fix a blow-up $\pi:\tilde{X}_i\to X$  of $X_i$ so that
\begin{enumerate}
\item $\tilde{X}_i$ is smooth.
\item $u$ defines a morphism $f:\tilde{X}_i\to\P^1$.
\item $D_0:=f^{-1}(0)$ and $D_\infty:=f^{-1}(\infty)$ are strict normal crossing divisors.
\end{enumerate}

We use the notation of proposition~\ref{prop:Boundary}. Let $f_0=f^*(X_1)$, $f_\infty=f^*(X_0)$ and define
\[
\Div^\Omega_i(u):=\pi_*(\Div^\Omega(f_0)-\Div^\Omega(f_\infty))\in \Omega^{(1)}_{d-1}(X).
\]
Since $\Div^\Omega_i$ is defined on $\Z[k(x_i)^\times]$, we need not check that $\Div^\Omega_i(u)$ is independent of the choice of $\tilde{X}_i$.

We first show that
\[
i_*\circ\Div^\Omega_i(u)=0.
\]
Indeed, let $D=D_0\cup D_\infty$ and let $\tilde{\iota}:D\to \tilde{X}_i$ be the inclusion. Then by \cite[proposition 3.1.9]{LevineMorel}
\[
\tilde{i}_*(\Div^\Omega(f_0))=c_1(f^*(\sO(1))=\tilde{i}_*(\Div^\Omega(f_\infty)),
\]
so $\tilde{i}_*(\Div^\Omega(f_0)-\Div^\Omega(f_\infty))=0$. Since
\[
\pi_*\circ\tilde{i}_*=i_*\circ \pi_*
\]
it follows that $i_*\circ\Div^\Omega_i(u)=0$, as desired.

Finally, we check that the diagram \eqref{eqn:MainDiag} commutes. The boundary map in the localization sequence for $\MGL'_{*,*}$ is compatible with the boundary map in the Gysin sequence for $\MGL^{*,*}$-theory with supports, i.e.,
\[
\partial^\MGL_{\tilde{X}_i,D}\circ\alpha_{\tilde{X}_i\setminus D}=\alpha_{\tilde{X}_i,D}\circ \partial^{\MGL'}_{\tilde{X}_i,D}.
\]
Thus, proposition~\ref{prop:Boundary} gives us
\begin{equation}\label{eqn:Boundary2}
\partial^{\MGL'}_{\tilde{X}_i,D}(\alpha_U^{-1}(\psi(u)))=\vartheta(D)(\Div^\Omega(f_0)-\Div^\Omega(f_\infty)).
\end{equation}

The boundary map in the localization sequence for $\MGL'_{*,*}$ is natural with respect to projective push-forward (this follows from \cite[lemma 2.6]{LevineOrient}), i.e.,
\[
\partial_i\circ\pi_*=\pi_*\circ\partial_{\tilde{X}_i,D}.
\]
Thus, applying $\pi_*$ to \eqref{eqn:Boundary2} yields
\begin{align*}
\partial_i(\alpha_U^{-1}(\psi(u)))&=\pi_*(\partial^{\MGL'}_{\tilde{X}_i,D}(\alpha_U^{-1}(\psi(u)))\\
&=\pi_*(\vartheta(D)(\Div^\Omega(f_0)-\Div^\Omega(f_\infty)))\\
&=\vartheta^{(1)}(X)(\pi_*(\Div^\Omega(f_0)-\Div^\Omega(f_\infty)))\\
&=\vartheta^{(1)}(X)(\Div^\Omega_i(u)),
\end{align*}
as desired. This verifies the commutativity of the diagram  \eqref{eqn:MainDiag} and completes the proof of theorem~\ref{thm:main}.

\end{document}